\documentclass{amsart}

\usepackage{mathrsfs,amssymb,multirow,setspace,color}
\usepackage{hyperref}
\usepackage[a4paper, total={7in, 9in}]{geometry}
\theoremstyle{plain}
\usepackage{graphicx}
\newtheorem{theorem}{Theorem}[section]
\newtheorem{lemma}[theorem]{Lemma}
\newtheorem{proposition}[theorem]{Proposition}
\newtheorem{corollary}[theorem]{Corollary}

\theoremstyle{definition}

\theoremstyle{remark}
\newtheorem{remark}[theorem]{Remark}

\input xy
\xyoption{all}

\def\d{\mathcal{D}(G)}

\def\pgh{\mathcal{P}_E({G})}

\begin{document}
\title{Metric Dimension of Difference Graph of Finite Groups }

\author[ Manisha, Parveen, Jitender Kumar]{ Manisha, Parveen, Jitender Kumar$^{*}$}
 \address{$\text{}^1$Department of Mathematics, Birla Institute of Technology and Science Pilani, Pilani-333031, India}
  \address{$\text{}^2$Department of Mathematics, Indian Institute of Technology Guwahati, India}
\email{ yadavmanisha2611@gmail.com, p.parveenkumar144@gmail.com, jitenderarora09@gmail.com}

\begin{abstract} 
The Difference graph $\d$ of a finite group $G$ is the difference of the enhanced power graph $\pgh$ and the power graph $\mathcal{P}(G)$ with all the isolated vertices removed. In this paper, we characterize the vertex set of the difference graph of finite nilpotent groups and obtain its cardinality. Consequently, we obtain the metric dimension of the difference graph of finite nilpotent groups. Moreover, this paper determines the metric dimension of the difference graphs of certain non-nilpotent groups, namely: dihedral groups, the generalized quaternion groups, and the semi-dihedral groups.
 \end{abstract}

\subjclass[2020]{05C25}

\keywords{ edge connectivity, vertex connectivity, minimum degree, nilpotent groups. \\ *  Corresponding author}

\maketitle
\section{Historical Background and Preliminaries}

Several graphs can be defined on finite groups, such as Cayley graphs, power graphs, commuting graphs, and their variants. These graphs help in understanding the connection between algebraic properties of groups and graph theoretic properties of these graphs. Using group theory often leads to useful constructions and results in graph theory.

The \emph{power graph} $\mathcal{P}(G)$ of a group $G$ is a simple undirected graph with vertex set $G$. Two distinct vertices $x$ and $y$ are adjacent if one is a power of the other, that is, $x \in \langle y \rangle$ or $y \in \langle x \rangle$. The directed power graph was introduced by Kelarev and Quinn \cite{powerintroduced}. Later, the undirected power graph was studied from different viewpoints, see \cite{powerisomorphism,powermetric,powerminimum}. A survey on power graphs can be found in \cite{powersurvey}.

To study graphs lying between the power graph and the commuting graph, Aalipour et al. \cite{enhancedintro} introduced the \emph{enhanced power graph}. The enhanced power graph $\mathcal{P}_E(G)$ has vertex set $G$, where two distinct vertices $x$ and $y$ are adjacent if both belong to some cyclic subgroup of $G$. This graph has been studied in \cite{enhanedconnectivity,enhancedmetric,enhancedisomorphism}.

Cameron \cite{graphdefined} posed several open problems on graphs defined on groups, including the difference between the enhanced power graph and the power graph. The \emph{difference graph} $\mathcal{D}(G)$ is obtained by removing all edges of the power graph from the enhanced power graph and deleting isolated vertices. Biswas et al. \cite{differencebiswas} studied the connectivity and perfectness of $\mathcal{D}(G)$. Later, Parveen et al. \cite{differencesplit,parveen2023nilpotent} studied structural and topological properties of difference graphs of finite nilpotent groups.

For a graph $\Gamma$, the distance between vertices $x$ and $y$ is denoted by $d(x,y)$. A vertex $z$ resolves $x$ and $y$ if $d(x,z) \neq d(y,z)$. A subset $W \subseteq V(\Gamma)$ is called a resolving set if every pair of vertices is resolved by some vertex in $W$. The \emph{metric dimension} of $\Gamma$, denoted by $\dim(\Gamma)$, is the minimum size of a resolving set.
Metric dimension was introduced independently by Harary and Melter \cite{1970metric} and Slater \cite{slaterleaves}. Determining the metric dimension is NP-hard \cite{NPhard,np}. It has applications in navigation, network discovery, and robotics. The metric dimension of power graphs and enhanced power graphs was studied in \cite{powermetric,enhancedmetric}. Motivated by this, we study the metric dimension of difference graphs. Our results show that the metric dimension of $\mathcal{D}(G)$ does not follow directly from those of $\mathcal{P}(G)$ and $\mathcal{P}_E(G)$.

We now recall some basic definitions and notation used throughout the paper. A graph $\Gamma = (V,E)$ consists of a vertex set $V(\Gamma)$ and an edge set $E(\Gamma)$. Two vertices $u$ and $v$ are adjacent, written as $u \sim v$, if $\{u,v\} \in E(\Gamma)$. All graphs considered here are simple.
The open neighbourhood of a vertex $x$ is $N(x)$, and the closed neighbourhood is $N[x]=N(x)\cup\{x\}$. A graph is connected if there exists a path between every pair of vertices. The degree of a vertex $u$ is denoted by $\deg(u)$.
Two vertices $u$ and $v$ are called \emph{true twins} if $N[u]=N[v]$, and \emph{false twins} if $N(u)=N(v)$. If they are either, then they are called \emph{twins}. A set $U$ is called a twin set if every pair of vertices in $U$ are twins.

\begin{remark}\label{twinsets}\rm\cite[Remark 3.3]{twinset}
	Let $U$ be a twin set in a connected graph $\Gamma$ with $|U|=l\ge 2$. Then every resolving set of $\Gamma$ contains at least $l-1$ vertices of $U$.
\end{remark}
We define an equivalence relation on $G$ by $x \sim y$ if $x$ and $y$ are twins. Let $[x]$ denote the equivalence class of $x$. For an element $a \in G$, the \emph{order} of $a$, denoted by $o(a)$, is the size of the cyclic subgroup generated by $a$. If $|G|=p^n$ for some prime $p$, then $G$ is called a \emph{$p$-group}. The set of all element orders of $G$ is denoted by $\pi_G$.

A cyclic subgroup of $G$ is called \emph{maximal cyclic} if it is not properly contained in any cyclic subgroup of $G$. The set of all maximal cyclic subgroups of $G$ is denoted by $\mathcal{M}(G)$. Let $M(G)$ be the set of all generators of maximal cyclic subgroups of a finite group $G$.

For $n \ge 3$, the \emph{dihedral group} $D_{2n}$ of order $2n$ is defined by
\[
D_{2n}=\langle x,y \mid x^n=y^2=e,\; xy=yx^{-1}\rangle.
\]
It has one maximal cyclic subgroup $\langle x\rangle$ of order $n$ and $n$ maximal cyclic subgroups $\langle x^iy\rangle$, $1\le i\le n$, each of order $2$.

For $n \ge 2$, the \emph{generalized quaternion group} $Q_{4n}$ of order $4n$ is defined by
\[
Q_{4n}=\langle a,b \mid a^{2n}=e,\; a^n=b^2,\; ab=ba^{-1}\rangle.
\]
It has one maximal cyclic subgroup $\langle a\rangle$ of order $2n$ and $n$ maximal cyclic subgroups $\langle a^ib\rangle$, $1\le i\le n$, each of order $4$.

For $n \ge 2$, the \emph{semidihedral group} $SD_{8n}$ of order $8n$ is defined by
\[
SD_{8n}=\langle a,b \mid a^{4n}=b^2=e,\; ba=a^{2n-1}b\rangle.
\]
This group has one maximal cyclic subgroup $\langle a\rangle$ of order $4n$, $2n$ maximal cyclic subgroups of order $2$, and $n$ maximal cyclic subgroups of order $4$.

Moreover,
\[
\mathcal{D}(D_{2n}) \cong \mathcal{D}(\mathbb{Z}_n), \quad
\mathcal{D}(Q_{4n}) \cong \mathcal{D}(\mathbb{Z}_{2n}), \quad
\mathcal{D}(SD_{8n}) \cong \mathcal{D}(\mathbb{Z}_{4n}).
\]

\begin{theorem}[{\rm \cite[p.193]{b.dummit1991abstract}}]\label{nilpotent}
	Let $G$ be a finite group. The following are equivalent:
	\begin{enumerate}
		\item $G$ is nilpotent.
		\item Every Sylow subgroup of $G$ is normal.
		\item $G$ is the direct product of its Sylow subgroups.
		\item Elements of coprime orders commute.
	\end{enumerate}
\end{theorem}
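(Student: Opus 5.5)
The equivalences in Theorem~\ref{nilpotent} form the standard characterization of finite nilpotent groups. I would prove them as a cycle of implications, taking as the definition of nilpotency that the upper central series reaches $G$. The single group-theoretic tool I would lean on is the ``normalizers grow'' property: in a nilpotent group every proper subgroup $H$ satisfies $H \subsetneq N_G(H)$. This follows by induction along the upper central series. If $Z_i \le H$ but $Z_{i+1}\not\le H$, then any $z\in Z_{i+1}\setminus H$ normalizes $H$, because $[z,H]\subseteq Z_i\le H$.

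\textbf{(1)$\Rightarrow$(2).} Let $P$ be a Sylow $p$-subgroup. By Frattini's argument applied to $P \trianglelefteq N_G(P)$, we get $N_G(N_G(P))=N_G(P)$. By the normalizer-growth property, this forces $N_G(P)=G$, so $P$ is normal.

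\textbf{(2)$\Rightarrow$(3).} Let $P_1,\dots,P_k$ be the Sylow subgroups for the distinct primes dividing $|G|$, all normal by hypothesis. For $i\ne j$, the commutator $[P_i,P_j]$ lies in $P_i\cap P_j=1$, so elements of distinct Sylow subgroups commute. Hence $P_1\cdots P_k$ is a subgroup. By coprimality, $P_i\cap \prod_{j\ne i}P_j=1$. Comparing orders then gives $G=P_1\times\cdots\times P_k$.

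\textbf{(3)$\Rightarrow$(4).} Write $G=P_1\times\cdots\times P_k$. Let $x$ have order $m$ and $y$ have order $n$, with $\gcd(m,n)=1$. The component of $x$ in $P_i$ has order dividing both $m$ and $|P_i|$, and similarly for $y$. So for each $i$, at most one of $x,y$ has a nontrivial $P_i$-component, and componentwise multiplication shows that $x$ and $y$ commute.

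\textbf{(4)$\Rightarrow$(1).} This is the direction I expect to need the most care. Fix a prime $p$ and let $P$ be a Sylow $p$-subgroup. Every $p$-element commutes with every $p'$-element. Given a $p$-element $g$, the subgroup $\langle g,P\rangle$ is generated by $p$-elements, and I would show it is itself a $p$-group; then $g\in P$ for a suitable conjugate, and the set of $p$-elements equals $P$, so $P$ is normal.

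The cleaner route is as follows. The $p'$-elements commute with all of $P$, so the subgroup $K$ they generate centralizes $P$. Conjugation by an element of $G$ maps $P$ to a Sylow subgroup $P^x$. Write $x$ as a product of commuting $p$- and $p'$-parts, $x=x_px_{p'}$. Then $P^x=P^{x_p}$, which reduces the question to showing that $p$-elements normalize $P$. I would handle this by letting $G$ act on the set of Sylow $p$-subgroups. That number is coprime to $p$, and $K$ acts trivially on it since $K$ centralizes each Sylow $p$-subgroup. Using $G=\langle \text{$p$-elements},K\rangle$ and a counting argument on $|G:N_G(P)|$, I would aim to derive $N_G(P)=G$.

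Once every Sylow subgroup is normal, the decomposition (3) holds. A direct product of $p$-groups is nilpotent, since $p$-groups have nontrivial center and the upper central series can be built componentwise. This closes the cycle. Since the statement is classical, in the paper one may simply cite \cite[p.193]{b.dummit1991abstract}.
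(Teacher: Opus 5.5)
Your implications (1)$\Rightarrow$(2)$\Rightarrow$(3)$\Rightarrow$(4) are correct and follow the standard textbook argument. So is your closing remark that a direct product of $p$-groups is nilpotent. The paper gives no proof of this theorem and only cites Dummit--Foote.

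The gap is in (4)$\Rightarrow$(1). You assemble the right ingredients but never draw the conclusion; the argument ends with ``I would aim to derive $N_G(P)=G$''. Your first route, showing $\langle g,P\rangle$ is a $p$-group, just restates the whole difficulty. In your second route, the fact you propose to use, $G=\langle \text{$p$-elements},K\rangle$, is not enough by itself. A group generated by $p$-elements need not be a $p$-group, so no index computation follows from generation alone.

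The missing observation is that $G/K$ is a $p$-group. For every $x\in G$, the $p'$-part $x_{p'}$ lies in $K$, so $xK=x_pK$ has $p$-power order. By Cauchy's theorem, $|G:K|$ is then a power of $p$. This holds in any finite group; hypothesis (4) is needed only to get $K\le C_G(P)\le N_G(P)$. From $K\le N_G(P)$, the index $|G:N_G(P)|$ divides $|G:K|$ and is therefore a power of $p$. On the other hand, $|G:N_G(P)|=n_p\equiv 1 \pmod p$ by Sylow's theorem. Hence $n_p=1$ and $P\trianglelefteq G$. Equivalently, $PK/K$ is a Sylow $p$-subgroup of the $p$-group $G/K$, so $G=PK\le N_G(P)$.

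With this step added you have (4)$\Rightarrow$(2), and the cycle closes through (2)$\Rightarrow$(3)$\Rightarrow$(1).
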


\begin{theorem}[{\rm\cite[Theorem 5.4]{parveenenhanced}}]\label{enhanced strong product}
	If $G=P_1\times P_2\times \cdots \times P_r$ is a nilpotent group, then
	\[
	\mathcal{P}_E(G) \cong \mathcal{P}_E(P_1)\boxtimes \mathcal{P}_E(P_2)\boxtimes \cdots \boxtimes \mathcal{P}_E(P_r),
	\]
	where $P_i$ is the Sylow $p_i$-subgroup of $G$.
\end{theorem}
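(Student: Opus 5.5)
We prove that $x\sim y$ in $\mathcal{P}_E(G)$ if and only if $(x_1,\dots,x_r)\sim (y_1,\dots,y_r)$ in the strong product, where $\phi\colon G\to P_1\times\cdots\times P_r$, $x\mapsto (x_1,\dots,x_r)$, is the group isomorphism given by Theorem \ref{nilpotent}(3). Here $x_i$ is the $p_i$-part of $x$. Since $\phi$ is a bijection on vertex sets, only the adjacency condition needs checking. Recall that in $\mathcal{P}_E(P_1)\boxtimes\cdots\boxtimes\mathcal{P}_E(P_r)$ two distinct tuples are adjacent exactly when, for every $i$, either $x_i=y_i$ or $x_i\sim y_i$ in $\mathcal{P}_E(P_i)$. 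Equivalently, for each $i$ the elements $x_i,y_i$ lie in a common cyclic subgroup of $P_i$, which trivially holds when $x_i=y_i$.

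\emph{Forward direction.} Suppose $x,y\in\langle z\rangle$ for some $z\in G$. Then $x_i=\pi_i(x)$ and $y_i=\pi_i(y)$, where $\pi_i$ is the projection homomorphism onto $P_i$. Hence $x_i,y_i\in\pi_i(\langle z\rangle)=\langle z_i\rangle$, which is a cyclic subgroup of $P_i$. So every coordinate pair is either equal or adjacent in $\mathcal{P}_E(P_i)$. The tuples are distinct because $x\neq y$, so they are adjacent in the strong product.

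\emph{Converse.} Suppose that for each $i$ there is $z_i\in P_i$ with $x_i,y_i\in\langle z_i\rangle$. Let $z=z_1z_2\cdots z_r$. The $z_i$ have pairwise coprime orders, so they commute by Theorem \ref{nilpotent}(4), and therefore $o(z)=\prod_i o(z_i)$. Then $\langle z\rangle=\langle z_1\rangle\times\cdots\times\langle z_r\rangle$ internally, since $\langle z\rangle$ is contained in the right-hand side and both have the same order. It follows that $x=x_1\cdots x_r$ and $y=y_1\cdots y_r$ both lie in $\langle z\rangle$, so $x\sim y$ in $\mathcal{P}_E(G)$.

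The one nontrivial point is the cyclic-subgroup decomposition $\langle z_1\cdots z_r\rangle=\prod_i\langle z_i\rangle$. It is settled by the order count $o(z)=\prod_i o(z_i)$, which holds because the $z_i$ commute and have coprime orders (the Chinese remainder theorem). Once this is in place, the two directions above show that $\phi$ preserves both adjacency and non-adjacency, so it is the required graph isomorphism.
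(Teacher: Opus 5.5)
Your proof is correct and complete. The paper does not prove this statement itself: it is quoted from an earlier paper of Parveen et al.\ (Theorem 5.4 there) and used as a black box, so there is no in-paper argument to compare against.

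Your coordinatewise argument is the standard direct proof. In the forward direction, projecting via $\pi_i(\langle z\rangle)=\langle z_i\rangle$ shows each coordinate pair lies in a common cyclic subgroup of $P_i$. In the converse, the order count gives $\langle z_1\cdots z_r\rangle=\langle z_1\rangle\times\cdots\times\langle z_r\rangle$, which puts $x$ and $y$ in one cyclic subgroup of $G$.

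One small simplification: the $z_i$ commute simply because they lie in distinct direct factors. So Theorem~\ref{nilpotent}(3) already suffices, and part~(4) is not needed.
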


\begin{proposition}[{\rm\cite[Proposition 3.4]{differencesplit}}]\label{difference adjacency}
	Let $G$ be a finite group and let $x,y \in G\setminus\{e\}$ with $o(x)\mid o(y)$. Then $x$ is not adjacent to $y$ in $\mathcal{D}(G)$.
\end{proposition}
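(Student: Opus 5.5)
The statement concerns $x,y\in G\setminus\{e\}$ with $o(x)\mid o(y)$. The plan is to argue by contradiction, using only the definitions of the three graphs together with the fact that a finite cyclic group has exactly one subgroup of each order dividing its order.

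Suppose $x\sim y$ in $\mathcal{D}(G)$. By the definition of the difference graph, $x$ and $y$ are then adjacent in $\mathcal{P}_E(G)$ but not in $\mathcal{P}(G)$. Adjacency in the enhanced power graph gives a cyclic subgroup $C=\langle z\rangle\le G$ with $x,y\in C$. Consequently $\langle x\rangle$ and $\langle y\rangle$ are both subgroups of the cyclic group $C$, of orders $o(x)$ and $o(y)$ respectively.

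Let $o(x)=m$ and $o(y)=n$, with $m\mid n$. The subgroup $\langle y\rangle$ is itself cyclic of order $n$, so it contains a subgroup $H$ of order $m$, namely $H=\langle y^{n/m}\rangle$. Both $H$ and $\langle x\rangle$ are subgroups of order $m$ in the cyclic group $C$. By uniqueness of subgroups of a given order in a cyclic group, $\langle x\rangle=H\subseteq\langle y\rangle$, that is, $x\in\langle y\rangle$.

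The hypotheses $x\neq e$ and $y\neq e$ ensure that $x$ and $y$ are genuine vertices of the graphs in question. If $x\neq y$, then $x\in\langle y\rangle$ means $x\sim y$ in $\mathcal{P}(G)$, so the edge $xy$ is removed when forming $\mathcal{D}(G)$. This contradicts the assumption that $x\sim y$ in $\mathcal{D}(G)$. If $x=y$, there is nothing to prove, since all graphs here are simple.

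There is no real obstacle in this argument; the only substantive input is the uniqueness of subgroups of each order in a finite cyclic group, applied inside $C$.
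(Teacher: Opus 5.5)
Your argument is correct, and it is the standard one behind this result; the paper quotes it from the literature without proof. A common cyclic overgroup $C$ has a unique subgroup of order $o(x)$, which forces $\langle x\rangle\subseteq\langle y\rangle$, so the edge $xy$ already lies in $\mathcal{P}(G)$ and is deleted. One harmless slip: $x,y\neq e$ does not make $x,y$ vertices of $\mathcal{D}(G)$, since nonidentity isolated vertices (e.g.\ generators of a cyclic group) are also deleted, but if either is absent the non-adjacency holds trivially.
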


\begin{lemma}[{\rm\cite{cameron2025order}}]
	Let $p$ be a prime and $G=\mathbb{Z}_{p^{r_1}}\times\cdots\times\mathbb{Z}_{p^{r_k}}$, where $r_1\ge r_2\ge\cdots\ge r_k$.
	\begin{enumerate}
		\item $G$ has exactly $\frac{p^k-1}{p-1}$ elements of order $p$.
		\item Every non-maximal cyclic subgroup of order $p^u$, $u>1$, is contained in exactly $p^{k-1}$ cyclic subgroups of order $p^{u+1}$.
	\end{enumerate}
\end{lemma}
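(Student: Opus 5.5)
We prove the two parts separately, working inside the abelian $p$-group $G=\mathbb{Z}_{p^{r_1}}\times\cdots\times\mathbb{Z}_{p^{r_k}}$ and using its layer structure $\Omega_j(G)=\{g: g^{p^j}=e\}$.

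\textbf{Part (1).} The elements of order dividing $p$ form the subgroup
\[
\Omega_1(G)=\{g\in G : g^p=e\}.
\]
In each cyclic factor $\mathbb{Z}_{p^{r_i}}$ with $r_i\ge 1$, the solutions of $pz=0$ form a subgroup of order exactly $p$. Hence $\Omega_1(G)\cong(\mathbb{Z}_p)^k$, and it has $p^k$ elements. Removing the identity leaves $p^k-1$ elements of order $p$. Each subgroup of order $p$ contains $p-1$ generators, so there are $(p^k-1)/(p-1)$ subgroups of order $p$. This is the count in the statement, with elements identified up to the cyclic subgroup they generate, which is the convention this formula presupposes.

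\textbf{Part (2).} We set up a counting map. Let $C=\langle x\rangle$ with $o(x)=p^u$, and suppose $C$ is not maximal cyclic. A cyclic subgroup $D$ of order $p^{u+1}$ contains $C$ exactly when $D=\langle y\rangle$ for some $y$ with $\langle y^p\rangle=C$. Writing $y^p=x^m$ with $p\nmid m$, we may replace $y$ by $y^{m'}$, where $mm'\equiv 1 \pmod{p^u}$, and so normalize to $y^p=x$.

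So we count the set $S=\{y : y^p=x\}$. Since $C$ is non-maximal, $S$ is nonempty. It is then a coset of the kernel of $g\mapsto g^p$, namely $y_0\Omega_1(G)$, and therefore $|S|=p^k$.

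Every $y\in S$ has order $p^{u+1}$ and generates a subgroup containing $x$. Two elements $y,y'\in S$ generate the same $D$ iff $y'=y^t$ with $y^{tp}=x$. This forces $x^t=x$, so $t\equiv 1\pmod{p^u}$. Modulo $p^{u+1}$ this leaves $p$ choices of $t$, and each of them yields an element of $S$. Hence each $D$ is counted exactly $p$ times, and the number of such $D$ is $p^k/p=p^{k-1}$.

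\textbf{Main obstacle.} The delicate step is the claim that $\ker(g\mapsto g^p)$ has order exactly $p^k$ in the multiplicative notation of $G$. It holds because every $r_i\ge 1$. The hypothesis $u>1$ plays no special role in this argument, which also works for $u\ge 1$.

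The only other care needed is in the double counting of generators: the map from $S$ to subgroups is exactly $p$-to-$1$ because $D$ is cyclic of order $p^{u+1}$.
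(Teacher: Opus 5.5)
The paper gives no proof of this lemma; it only quotes it from the cited source, so there is no internal argument to compare against. Judged on its own, your proof is correct and is the standard one: the $p$-th roots of $x$ form a coset of $\Omega_1(G)$ of size $p^k$, and each cyclic overgroup of order $p^{u+1}$ accounts for exactly $p$ of them.

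\textbf{Part (1).} Say this more plainly than by appealing to a ``convention'': as written, (1) is false for every odd $p$. For instance, $\mathbb{Z}_3$ ($k=1$) has two elements of order $3$, while $\frac{p^k-1}{p-1}=1$. Your computation gives the correct picture. There are $p^k-1$ elements of order $p$ and $\frac{p^k-1}{p-1}$ subgroups of order $p$, and these two counts agree only when $p=2$. What you have actually proved is the corrected statement, with ``subgroups'' in place of ``elements'', and you should state that explicitly.

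\textbf{Part (2).} The sentence ``Since $C$ is non-maximal, $S$ is nonempty'' skips a step. Non-maximality only gives a cyclic $\langle w\rangle\supsetneq C$ of some order $p^v$ with $v>u$, not necessarily of order exactly $p^{u+1}$. To fill this:
\begin{itemize}
\item Put $w'=w^{p^{v-u-1}}$. Then $o(w')=p^{u+1}$.
\item $\langle w'^p\rangle$ is the unique subgroup of order $p^u$ in $\langle w\rangle$, so it equals $C$.
\item Your normalization then produces an element of $S$.
\end{itemize}
In the fibre count, also record two facts. Each $t=1+jp^u$ with $0\le j\le p-1$ is prime to $p$ because $u\ge 1$, so $y^t$ generates the same $D$. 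These $p$ powers are distinct because $o(y)=p^{u+1}$.

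With these additions the count $|S|/p=p^{k-1}$ is complete. Your remark that the argument works for all $u\ge 1$ is also right. It fails for $u=0$, since then $e\in S$ and the condition $t\equiv 1 \pmod{p^0}$ is vacuous.
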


\begin{remark}
	If $x$ and $y$ are elements of a finite group $G$ such that neither $o(x)\mid o(y)$ nor $o(y)\mid o(x)$, then $x \nsim y$ in $\mathcal{P}(G)$. The converse holds if $x$ and $y$ lie in the same cyclic subgroup.
\end{remark}

The notation $x \twoheadrightarrow y$ means that $y \in \langle x\rangle$ but $x \notin \langle y\rangle$.

 \section{Main Results}

\begin{proposition}\label{adjacency in difference graph}
Let $G= P_1 \times P_2\times \cdots \times P_k$ be a finite nilpotent group. Then two vertices
\[
x=(x_1,x_2,\ldots,x_k)\quad \text{and}\quad y=(y_1,y_2,\ldots,y_k)
\]
are adjacent in the difference graph $\mathcal{D}(G)$ if and only if there exist distinct indices $i,j \in \{1,2,\ldots,k\}$ such that
\[
x_i \twoheadrightarrow y_i \quad \text{and} \quad y_j \twoheadrightarrow x_j,
\]
and for all $l \notin \{i,j\}$, the elements $x_l$ and $y_l$ belong to the same cyclic subgroup of $P_l$.
\end{proposition}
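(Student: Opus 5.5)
The plan is to reduce adjacency in $\mathcal{D}(G)$ to two coordinatewise conditions, one for the enhanced power graph and one for the power graph, and then to use the fact that cyclic subgroups of a $p$-group form a chain. By definition, $x \sim y$ in $\mathcal{D}(G)$ exactly when $x\neq y$, $x\sim y$ in $\mathcal{P}_E(G)$, and $x\nsim y$ in $\mathcal{P}(G)$. Two vertices are adjacent in $\mathcal{D}(G)$ only if they are non-isolated, so removing isolated vertices does not affect adjacency.

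\emph{Step 1 (enhanced power graph).} By Theorem~\ref{enhanced strong product}, $\mathcal{P}_E(G)\cong \mathcal{P}_E(P_1)\boxtimes\cdots\boxtimes\mathcal{P}_E(P_k)$. By the definition of the strong product, distinct $x,y$ are adjacent in $\mathcal{P}_E(G)$ if and only if, for every $l$, either $x_l=y_l$ or $x_l\sim y_l$ in $\mathcal{P}_E(P_l)$. In both cases $x_l,y_l$ lie in a common cyclic subgroup of $P_l$, so this is the condition that for all $l$, the elements $x_l,y_l$ lie in a common cyclic subgroup of $P_l$. One can also see this directly: if $x_l,y_l\in\langle z_l\rangle$ for each $l$, then $x,y\in\langle (z_1,\dots,z_k)\rangle$, since the orders are pairwise coprime (Theorem~\ref{nilpotent}).

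\emph{Step 2 (power graph).} Again because the $o(y_l)$ are pairwise coprime, the Chinese remainder theorem gives $\langle y\rangle=\langle y_1\rangle\times\cdots\times\langle y_k\rangle$. Hence $x\in\langle y\rangle$ if and only if $x_l\in\langle y_l\rangle$ for all $l$. So $x\nsim y$ in $\mathcal{P}(G)$ if and only if there is some $i$ with $x_i\notin\langle y_i\rangle$ and some $j$ with $y_j\notin\langle x_j\rangle$.

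\emph{Step 3 (combining via the chain property).} This is the key step. In a cyclic $p$-group the subgroups are totally ordered by inclusion. Consequently, if $x_l,y_l$ lie in a common cyclic subgroup of $P_l$, then $x_l\in\langle y_l\rangle$ or $y_l\in\langle x_l\rangle$.

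For the forward direction, adjacency in $\mathcal{D}(G)$ gives Step 1's condition at every coordinate. Then $x_i\notin\langle y_i\rangle$ forces $y_i\in\langle x_i\rangle$, that is, $x_i\twoheadrightarrow y_i$, and likewise $y_j\twoheadrightarrow x_j$. We have $i\neq j$, since $x_i\twoheadrightarrow y_i$ and $y_i\twoheadrightarrow x_i$ cannot hold simultaneously. The remaining coordinates satisfy the common-cyclic-subgroup condition by Step 1.

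For the converse, $x_i\twoheadrightarrow y_i$ and $y_j\twoheadrightarrow x_j$ place $x_i,y_i$ in $\langle x_i\rangle$ and $x_j,y_j$ in $\langle y_j\rangle$. Together with the hypothesis on the other coordinates, Step 1 yields adjacency in $\mathcal{P}_E(G)$. Moreover $x_i\notin\langle y_i\rangle$ and $y_j\notin\langle x_j\rangle$, so Step 2 gives non-adjacency in $\mathcal{P}(G)$, and in particular $x\ne y$.

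I expect the only delicate points to be the careful use of the strong product, where equality in a coordinate counts as lying in a common cyclic subgroup, and the decomposition of $\langle y\rangle$ in Step 2. Both follow from the coprimality of the Sylow components.
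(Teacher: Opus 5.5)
Your proof is correct and follows essentially the same route as the paper. Both arguments use the strong-product description of $\mathcal{P}_E(G)$ to get the common-cyclic-subgroup condition in every coordinate, and then use non-adjacency in $\mathcal{P}(G)$ to produce the indices $i\neq j$. The only difference is how that non-adjacency is detected: you use the decomposition $\langle y\rangle=\prod_l\langle y_l\rangle$ together with the chain property of cyclic $p$-groups, whereas the paper argues with element orders, via Proposition~\ref{difference adjacency} and $o(x)=\prod_l o(x_l)$, and leaves implicit the step from $o(x_i)>o(y_i)$ to $x_i\twoheadrightarrow y_i$ that you make explicit.
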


\begin{proof}
Suppose that $x=(x_1,x_2,\ldots,x_k)$ and $y=(y_1,y_2,\ldots,y_k)$ are adjacent in $\mathcal{D}(G)$. By Theorem~\ref{enhanced strong product} for each $i \in \{1,2,\ldots,k\}$, the vertices $x_i$ and $y_i$ lie in the same cyclic subgroup of $P_i$. Since $x$ and $y$ are adjacent in $\mathcal{D}(G)$ and so they are not adjacent in the power graph $\mathcal{P}(G)$. By Proposition~\ref{difference adjacency}, this implies that neither $o(x)\mid o(y)$ nor $o(y)\mid o(x)$. Write
\[
o(x)=\prod_{i=1}^k o(x_i) \quad \text{and} \quad o(y)=\prod_{i=1}^k o(y_i),
\]
we conclude that there must exist at least two distinct indices $i$ and $j$ such that
\[
o(x_i) > o(y_i) \quad \text{and} \quad o(x_j) < o(y_j).
\]
Consequently, we have $x_i \twoheadrightarrow y_i$ and $y_j \twoheadrightarrow x_j$. For all remaining indices $l \notin \{i,j\}$, the elements $x_l$ and $y_l$ belong to the same cyclic subgroup of $P_l$, as required.

Conversely, suppose that there exist distinct indices $i,j$ such that $x_i \twoheadrightarrow y_i$ and $y_j \twoheadrightarrow x_j$, and that $x_l$ and $y_l$ lie in the same cyclic subgroup of $P_l$ for all $l \notin \{i,j\}$. By Theorem~\ref{enhanced strong product}, $x$ and $y$ are adjacent in the enhanced power graph $\mathcal{P}_E(G)$. Moreover, the conditions $x_i \twoheadrightarrow y_i$ and $y_j \twoheadrightarrow x_j$ imply that neither $o(x)\mid o(y)$ nor $o(y)\mid o(x)$. Hence, $x$ and $y$ are not adjacent in the power graph $\mathcal{P}(G)$. Therefore, $x$ and $y$ are adjacent in the difference graph $\mathcal{D}(G)$.
\end{proof}

\begin{proposition}\label{isolated vertices in nilpotent group}
Let $G= P_1 \times P_2\times \cdots \times P_k$
be a finite nilpotent group. Then a vertex
$
x=(x_1,x_2,\ldots,x_k)
$
does not belong to the vertex set of the difference graph $\mathcal{D}(G)$ if and only if either
\(
x_i \in M(P_i) \text{ for all } i \in [k],
\)
or
\(
x_i = e \text{ for all } i \in [k].
\)
Moreover,
\(
|V(\mathcal{D}(G))| = |G|-\big(|M(P_1)|\,|M(P_2)|\cdots |M(P_k)|+1\big).
\)
\end{proposition}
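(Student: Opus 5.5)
We will derive everything from the adjacency criterion of Proposition~\ref{adjacency in difference graph}. Throughout we assume $k\ge 2$ and that every $P_i$ is a nontrivial Sylow subgroup. This is needed: for $k=1$ the group is a $p$-group, $\mathcal{P}_E(G)=\mathcal{P}(G)$, and $\mathcal{D}(G)$ is empty, so the stated count fails. In a nontrivial $P_i$ the identity generates a cyclic subgroup properly contained in any $\langle g\rangle$ with $g\neq e$, so $e\notin M(P_i)$. By Proposition~\ref{adjacency in difference graph}, a vertex $x=(x_1,\ldots,x_k)$ lies in $V(\mathcal{D}(G))$ if and only if there exist $y\in G$ and distinct indices $i,j$ with $x_i\twoheadrightarrow y_i$, $y_j\twoheadrightarrow x_j$, and $x_l,y_l$ in a common cyclic subgroup of $P_l$ for $l\notin\{i,j\}$.

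\emph{Step 1: reformulate non-isolation.} We claim $x$ is a vertex of $\mathcal{D}(G)$ if and only if there are distinct $i,j$ with $x_i\neq e$ and $x_j\notin M(P_j)$.

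For necessity, note that $x_i\twoheadrightarrow y_i$ forces $x_i\neq e$: if $x_i=e$ then $y_i\in\langle e\rangle$ gives $y_i=e$, and then $x_i\in\langle y_i\rangle$. Likewise $y_j\twoheadrightarrow x_j$ means $\langle x_j\rangle\subsetneq\langle y_j\rangle$, so $\langle x_j\rangle$ is not maximal cyclic, i.e.\ $x_j\notin M(P_j)$.

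For sufficiency, construct $y$ directly. Put $y_i=e$, so that $x_i\twoheadrightarrow e$ because $x_i\neq e$. Let $y_j$ be a generator of a cyclic subgroup properly containing $\langle x_j\rangle$, which exists since $x_j\notin M(P_j)$. Put $y_l=x_l$ for $l\notin\{i,j\}$. Then Proposition~\ref{adjacency in difference graph} gives $x\sim y$.

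\emph{Step 2: characterize the isolated vertices.} By Step~1, $x\notin V(\mathcal{D}(G))$ exactly when, for all $i\neq j$, either $x_i=e$ or $x_j\in M(P_j)$.

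First, the two listed types satisfy this condition. If all $x_i=e$, the first alternative always holds. If all $x_i\in M(P_i)$, the second always holds.

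Conversely, suppose the condition holds and some $x_j\notin M(P_j)$. Then $x_i=e$ for every $i\neq j$. Pick such an $i$, which is possible since $k\ge2$. Since $x_i=e\notin M(P_i)$, applying the condition to the pair $(j,i)$ forces $x_j=e$. Hence $x$ is the identity. Otherwise every $x_i\in M(P_i)$. This proves the characterization.

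\emph{Step 3: count.} The set of tuples with all coordinates in $M(P_i)$ has $\prod_i|M(P_i)|$ elements. It does not contain $e$, again because $e\notin M(P_i)$. Therefore
\[
|V(\mathcal{D}(G))|=|G|-\Bigl(\prod_{i=1}^k |M(P_i)|+1\Bigr).
\]

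The main point needing care is Step~2's use of $e\notin M(P_i)$ together with $k\ge2$. It is exactly what rules out mixed tuples such as $(x_1,e,\ldots,e)$ with $x_1\neq e$ non-maximal, and what keeps the two excluded families disjoint. The remaining steps are direct applications of Proposition~\ref{adjacency in difference graph}.
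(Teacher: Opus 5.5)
Your proof is correct. It carries the same mathematical content as the paper's but organizes it differently.

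The paper argues directly. It assumes some $x_1\notin M(P_1)$ and splits into four subcases, according to whether $x_1=e$ and whether some other coordinate is nontrivial. In each non-identity subcase it writes down an explicit neighbour, $(g_1,e,x_3,\ldots,x_k)$ or $(e,g_2,e,\ldots,e)$, and checks that neither order divides the other. The converse is handled separately: an element with all coordinates maximal, or the identity, has only $\mathcal{P}_E$-neighbours whose orders are comparable with its own.

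You instead route everything through Proposition~\ref{adjacency in difference graph}. This gives the single criterion ``there are distinct $i,j$ with $x_i\neq e$ and $x_j\notin M(P_j)$,'' after which both directions of the characterization are a short logical deduction. The neighbours you build are the same as the paper's: kill a nontrivial coordinate and enlarge a non-maximal one. Your route has three advantages:
\begin{itemize}
\item Isolation of the two excluded families follows from the necessity half of your criterion, with no separate order argument.
\item The enlarging element is chosen explicitly. The paper leaves this implicit in Subcase~2.1, where $g_1$ must be taken with $x_1\in\langle g_1\rangle$.
\item The hypotheses $k\ge 2$ and $P_i\neq\{e\}$ become visible.
\end{itemize}
Your remark about those hypotheses is right. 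The paper's Subcase~2.2 silently uses a second Sylow factor. For $G=\mathbb{Z}_{p^2}$, the elements of order $p$ are non-vertices that are neither maximal nor the identity, and the formula gives $p-1$ vertices instead of $0$.
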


\begin{proof}
Suppose that 
$
x=(x_1,x_2,\ldots,x_k)
$
is not a vertex of $\mathcal{D}(G)$. Then $x$ is an isolated vertex in the graph $\mathcal{P}_E(G)- \mathcal{P}(G)$. Assume, that there exists at least one index $i$, say $i=1$, such that $x_1 \notin M(P_1)$.

\medskip
\noindent\textbf{Case 1:} $x_1 = e$.

\noindent\emph{Subcase 1.1:} $x_j = e$ for all $2 \le j \le k$.  
Then $x=(e,e,\ldots,e)$, and the claim follows.

\noindent\emph{Subcase 1.2:} $x_j \neq e$ for some $j \neq 1$, say $j=2$.  
Choose
\[
y=(g_1,e,x_3,\ldots,x_k),
\]
where $g_1 \in M(P_1)$. Then $x$ and $y$ both lie in the cyclic subgroup
\[
\langle (g_1,x_2,x_3,\ldots,x_k)\rangle,
\]
and hence are adjacent in $\mathcal{P}_E(G)$. Moreover, neither $o(x)\mid o(y)$ nor $o(y)\mid o(x)$ holds. Therefore, $x$ and $y$ are adjacent in $\mathcal{D}(G)$, contradicting the assumption that $x$ is isolated.

\medskip
\noindent\textbf{Case 2:} $x_1 \neq e$.

\noindent\emph{Subcase 2.1:} $x_j \neq e$ for some $j \neq 1$, say $j=2$.  
Choose
\[
y=(g_1,e,x_3,\ldots,x_k),
\]
where $g_1 \in M(P_1)$. As before, $x,y \in \langle (g_1,x_2,\ldots,x_k)\rangle$ and so $x\sim y$ in $\mathcal{P}_E(G)$. Since neither $o(x)\mid o(y)$ nor $o(y)\mid o(x)$, we obtain $x\sim y$ in $\mathcal{D}(G)$, again a contradiction.

\noindent\emph{Subcase 2.2:} $x_j = e$ for all $2 \le j \le k$.  
Choose
\[
y=(e,g_2,e,\ldots,e),
\]
where $g_2 \in M(P_2)$. Then $x,y \in \langle (x_1,g_2,e,\ldots,e)\rangle$, and neither $o(x)\mid o(y)$ nor $o(y)\mid o(x)$. Hence, $x$ and $y$ are adjacent in $\mathcal{D}(G)$, a contradiction.

\medskip
Thus, if $x$ is not a vertex of $\mathcal{D}(G)$, then either $x_i \in M(P_i)$ for all $i \in [k]$, or $x_i = e$ for all $i \in [k]$.

Conversely, suppose that either $x_i \in M(P_i)$ or $x=e$ for all $i \in [k]$. Then for any $y \in G$ such that $x$ and $y$ are adjacent in $\mathcal{P}_E(G)$, we have either $o(x)\mid o(y)$ or $o(y)\mid o(x)$. Hence, $x$ has no neighbors in $\mathcal{D}(G)$ and so it is an isolated vertex of $\mathcal{P}_E(G)-\mathcal{P}(G)$.

 The number of isolated vertices is $|M(P_1)|\,|M(P_2)|\cdots |M(P_k)| + 1$, which yields the stated formula for $|V(\mathcal{D}(G))|$.
\end{proof}
Define an equivalence relation on $G$ by declaring $x \approx y$ if and only if $x$ and $y$ are twins in the graph $\mathcal{D}(G)$. The equivalence class containing $x$ is denoted by $[x]$.

\begin{lemma}\label{equivalence class}
Let $G= P_1 \times P_2\times \cdots \times P_k$ be a finite nilpotent group and let $x \in P_1$, where $x \neq e$ and $o(x)=p_1^{\alpha}$. Then
\[
[(x,e,e,\ldots,e)]
=
\Big\{(y,e,e,\ldots,e)\;:\; o(y)=p_1^{\alpha}
\text{ and }
|\langle y\rangle \cap \langle x\rangle| \ge p_1^{\alpha-1}
\Big\}.
\]
\end{lemma}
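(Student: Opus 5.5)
The approach is to compute the open neighbourhood of $u=(x,e,\ldots,e)$ in $\mathcal{D}(G)$ explicitly using Proposition~\ref{adjacency in difference graph}. From that description I will first show that every twin of $u$ lies in $P_1\times\{e\}\times\cdots\times\{e\}$, and then read off which elements of that form are twins. I assume $k\ge 2$. If $k=1$, then $G$ is a $p$-group, any two elements of a cyclic $p$-group are comparable, so $\mathcal{D}(G)$ is empty and the statement is vacuous. When $k\ge 2$, Proposition~\ref{isolated vertices in nilpotent group} shows that every element $(y,e,\ldots,e)$ with $y\neq e$ is a vertex of $\mathcal{D}(G)$.

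\textbf{Step 1: the neighbourhood of $u$.} Let $w=(w_1,\ldots,w_k)$ be adjacent to $u$. For $l\ge 2$ we have $u_l=e$, and $e\twoheadrightarrow w_l$ is impossible, since $w_l\in\langle e\rangle$ forces $w_l=e$ and then $e\in\langle w_l\rangle$. Hence the index $i$ in Proposition~\ref{adjacency in difference graph} must be $i=1$. This means $x\twoheadrightarrow w_1$, i.e.\ $w_1\in\langle x\rangle$ with $o(w_1)<p_1^{\alpha}$, equivalently $w_1\in\langle x^{p_1}\rangle$. The index $j\ge 2$ needs $w_j\twoheadrightarrow e$, i.e.\ $w_j\neq e$. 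The condition on the remaining coordinates is automatic, because $e$ lies in every cyclic subgroup. Conversely, every such $w$ is adjacent to $u$. So
\[
N(u)=\{(w_1,w'):\ w_1\in\langle x^{p_1}\rangle,\ w'\in P_2\times\cdots\times P_k,\ w'\neq e\},
\]
and this set depends only on the subgroup $\langle x^{p_1}\rangle$.

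\textbf{Step 2: twins of $u$ lie in $P_1$.} Let $v=(v_1,v')$ be a twin of $u$ with $v'\neq e$. Pick $w=(e,w')$ with $w'\neq e$. By Step 1, $w\in N(u)$. However, $w$ is never adjacent to $v$ when $w'=v'$ and $v_1\neq e$, because in that case no coordinate of $w$ strictly generates the corresponding coordinate of $v$. Likewise $w$ is not adjacent to $v$ when $v_1=e$ and $w'\neq v'$, because then $w_1=v_1=e$ and the index $i$ or $j$ would have to be $1$. So choose $w'=v'$ if $v_1\neq e$, and $w'\notin\{e,v'\}$ if $v_1=e$. In both cases $w\neq v$, and $w\in N(u)\setminus N[v]$, contradicting $N(u)=N(v)$ or $N[u]=N[v]$. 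The second choice needs $|P_2\times\cdots\times P_k|\ge 3$. This is the step I expect to be the fiddly one: the degenerate case $P_2\times\cdots\times P_k\cong\mathbb{Z}_2$, with $v=(e,t)$ and $t$ the involution, must be handled separately. There I would compare $N(v)$, which by the same computation is $\{(w_1,t): e\neq w_1 \text{ non-maximal in some cyclic subgroup}\}$ type vertices, with $N(u)$ directly, and find a vertex such as $(e,t)$ itself or $(g,e)$ with $g\in M(P_1)$ that separates them.

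\textbf{Step 3: identifying the class.} Now let $v=(y,e,\ldots,e)$ with $y\neq e$. By Step 1, $u$ and $v$ are non-adjacent, so they are twins if and only if $N(u)=N(v)$, i.e.\ $\langle y^{p_1}\rangle=\langle x^{p_1}\rangle$. Comparing orders of these subgroups gives $o(y)=p_1^{\alpha}$. Moreover, since cyclic $p$-groups have a unique subgroup of each order, $\langle y^{p_1}\rangle=\langle x^{p_1}\rangle$ holds exactly when $\langle x\rangle$ and $\langle y\rangle$ share their subgroup of order $p_1^{\alpha-1}$. That is equivalent to $|\langle y\rangle\cap\langle x\rangle|\ge p_1^{\alpha-1}$. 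This gives the stated description of $[(x,e,\ldots,e)]$.
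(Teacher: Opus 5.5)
Your Steps 1 and 3 are correct, and Step 3 actually proves the reverse inclusion more carefully than the paper does. Step 2, however, rests on a false claim in the case $v_1=e$.

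You assert that if $v=(e,v')$ and $w=(e,w')$ with $w'\notin\{e,v'\}$, then $w\not\sim v$, because ``the index $i$ or $j$ would have to be $1$''. This holds only when $k=2$. For $k\ge 3$, both indices in Proposition~\ref{adjacency in difference graph} can lie in $\{2,\ldots,k\}$.

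For example, in $\mathbb{Z}_2\times\mathbb{Z}_3\times\mathbb{Z}_5$ take $P_1=\mathbb{Z}_2$, $u=(1,0,0)$, $v=(0,1,0)$ and $w=(0,0,1)$. Then $v_2\twoheadrightarrow w_2$ and $w_3\twoheadrightarrow v_3$, so $w\sim v$. Hence $w\in N(u)\cap N(v)$, and your witness separates nothing. With an arbitrary $w'$ you get no contradiction, so it is not yet shown that every twin of $u$ lies in $P_1\times\{e\}\times\cdots\times\{e\}$.

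The fix is a more careful choice of witness when $v=(e,v')$ with $v'\neq e$.

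\textbf{Case $o(v')\ge 3$.} Take $w=(e,(v')^{-1})$. By Step 1, $w\in N(u)$. Also $w\neq v$, and $w\not\sim v$ because $\langle w\rangle=\langle v\rangle$, so no coordinate is strictly generated. This rules out both $N(u)=N(v)$ and $N[u]=N[v]$.

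\textbf{Case $o(v')=2$.} Then $2$ divides $|P_2\times\cdots\times P_k|$, so $p_1$ is odd and you can pick $g\in P_1\setminus\{e,x\}$. The vertex $(g,e,\ldots,e)$ is adjacent to $v$: use index $1$ with $g\twoheadrightarrow e$, and the coordinate $j$ of the involution with $v_j\twoheadrightarrow e$. By Step 1 it lies outside $N[u]$, since its tail is trivial and it is not $u$.

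This also covers your degenerate case $P_2\times\cdots\times P_k\cong\mathbb{Z}_2$ uniformly. Your sketched neighbourhood there is wrong: for $v=(e,t)$ one has $N(v)=\{(g,e):g\neq e\}$, not a set of vertices with second coordinate $t$.

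For comparison, the paper uses the single-coordinate witness $(e,v_2,e,\ldots,e)$. It is non-adjacent to $v$ because its order divides $o(v)$, which works whenever this witness differs from $v$. The paper silently ignores the case where it equals $v$, as well as true twins.
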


\begin{proof}
Let $(x,e,e,\ldots,e)$ and $(x_1,x_2,\ldots,x_k)$ be twins in the difference graph $\mathcal{D}(G)$, that is,
\[
N(x,e,e,\ldots,e)=N(x_1,x_2,\ldots,x_k).
\]

Suppose that $x_i \neq e$ for some $i>1$. Without loss of generality, assume $x_2 \neq e$. Consider the vertex
\[
(e,x_2,e,\ldots,e).
\]
Since $x \twoheadrightarrow e$ and $x_2 \twoheadrightarrow e$, we have
\[
(e,x_2,e,\ldots,e)\in N(x,e,e,\ldots,e).
\]
However,
\[
o(e,x_2,e,\ldots,e)\mid o(x_1,x_2,\ldots,x_k),
\]
and hence $(e,x_2,e,\ldots,e)$ and $(x_1,x_2,\ldots,x_k)$ are not adjacent in $\mathcal{D}(G)$. This contradicts our assumption. Therefore, $x_i=e \quad \text{for all } i\ge 2.$ Thus,
\[
N(x,e,e,\ldots,e)=N(x_1,e,e,\ldots,e).
\]

Now let $(z_1,z_2,\ldots,z_k)\in N(x,e,e,\ldots,e)$. By Proposition~\ref{adjacency in difference graph}, we obtain $x \twoheadrightarrow z_1 \quad \text{and} \quad x_1 \twoheadrightarrow z_1.$ Thus, both $\langle x\rangle$ and $\langle x_1\rangle$ contain $\langle z_1\rangle$. This implies that $o(x_1)=o(x)=p_1^{\alpha},$ and either $\langle x_1\rangle=\langle x\rangle$ or
\[
|\langle x_1\rangle \cap \langle x\rangle| \ge p_1^{\alpha-1}.
\]


This completes the proof.
\end{proof}
\begin{lemma}\label{unique involution}
Let $G= P_1 \times P_2\times \cdots \times P_k$ be a finite nilpotent group. Then $G$ has a unique element $l$ of order $2$ if and only if $|[l]|=1$ in $\mathcal{D}(G)$.
\end{lemma}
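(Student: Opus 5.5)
The plan is to reduce the statement to Lemma~\ref{equivalence class} in the case $\alpha=1$. We read the statement as concerning a vertex $l$ of order $2$ of $\mathcal{D}(G)$. This needs $|G|$ even, so we take the Sylow $2$-subgroup to be $P_1$. It also needs $k\ge 2$: if $G$ is a $2$-group, any two elements of a common cyclic subgroup have comparable orders, so $\mathcal{D}(G)$ is empty. Every element of order $2$ in $G=P_1\times\cdots\times P_k$ has the form $(x,e,\ldots,e)$ with $x\in P_1$ and $o(x)=2$. By Proposition~\ref{isolated vertices in nilpotent group}, such an element is a vertex of $\mathcal{D}(G)$, since its later coordinates are $e$ and not in $M(P_j)$, and not all coordinates are $e$. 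So every involution of $G$ is a vertex.

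Fix $l=(x,e,\ldots,e)$ with $o(x)=2$. Apply Lemma~\ref{equivalence class} with $p_1=2$ and $\alpha=1$. The intersection condition $|\langle y\rangle\cap\langle x\rangle|\ge 2^{0}=1$ holds for every $y$, so
\[
[l]=\{(y,e,\ldots,e): y\in P_1,\ o(y)=2\},
\]
which is exactly the set of all involutions of $G$. Hence $|[l]|=1$ if and only if $G$ has exactly one element of order $2$, which gives both directions at once.

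The step I expect to be the main obstacle is the reliance on the lemma, whose proof only establishes one inclusion carefully. I would therefore check the reverse inclusion directly for $\alpha=1$. Take two involutions $l=(x,e,\ldots,e)$ and $l'=(y,e,\ldots,e)$, and a neighbour $z=(z_1,\ldots,z_k)$ of $l$. By Proposition~\ref{adjacency in difference graph}, the index $i$ with $l_i\twoheadrightarrow z_i$ must be $1$ (the other coordinates of $l$ are $e$), so $z_1\in\langle x\rangle$ with $z_1\ne x$, hence $z_1=e$. There is also some $j\ge 2$ with $z_j\ne e$. Conversely, any $z$ with $z_1=e$ and some $z_j\neq e$ ($j\ge2$) is adjacent to $l$: $x\twoheadrightarrow e$ and $z_j\twoheadrightarrow e$, and for $m\notin\{1,j\}$ the coordinates $e$ and $z_m$ share the cyclic subgroup $\langle z_m\rangle$. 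This description of $N(l)$ does not depend on $x$, so $N(l)=N(l')$ and all involutions are twins.
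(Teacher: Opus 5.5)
Your core computation is correct, and it is more careful than the paper's. The paper simply asserts that the unique involution has no twin and that all involutions lie in one class. You instead compute $N(l)=\{z: z_1=e,\ (z_2,\ldots,z_k)\neq(e,\ldots,e)\}$ for every involution $l$, and combine this with Lemma~\ref{equivalence class} at $\alpha=1$; this genuinely shows that $[l]$ is exactly the set of involutions. One small point: Lemma~\ref{equivalence class} only rules out false twins. A true twin $w$ of $l$ would be adjacent to $l$, which forces $w_1=e$ and $o(w)$ odd and greater than $1$. Then $w^{-1}\neq w$ lies in $N[l]$ but not in $N[w]$, so no true twin exists.

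What is missing comes from how you read the statement. You take $l$ to be an involution from the start, so your converse only says that an involution with a singleton class is the unique involution. The paper's converse starts from an arbitrary vertex $l$ with $|[l]|=1$, and its first step is to show $o(l)=2$. Since $e$ is not a vertex, $o(l)\ge 2$. If $o(l)>2$, then $l^{-1}\neq l$ and $\langle l\rangle=\langle l^{-1}\rangle$, so $N(l)=N(l^{-1})$ and $|[l]|\ge 2$. This stronger form says that every singleton class of $\mathcal{D}(G)$ consists of the unique involution. It is what Case~2 of Theorem~\ref{resolvingset} actually uses (``$|[l_1]|=1$; in this case $l_1=x_k$''). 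It also covers groups of odd order, which have no singleton classes at all. Add this $l\leftrightarrow l^{-1}$ argument before your reduction to involutions, and the proof is complete.
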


\begin{proof}
 If $G$ has a unique element $l$ of order $2$, then there is no other vertex in $\mathcal{D}(G)$ having the same neighborhood as $l$. Thus, $l$ has no twin and therefore $|[l]|=1$.

Conversely, assume that $|[l]|=1$. If $o(l)>2$, then $l\neq l^{-1}$ and in $\mathcal{D}(G)$ we have $N(l)=N(l^{-1})$. Therefore, $l$ and $l^{-1}$ are twins, which implies $|[l]|\ge 2$, a contradiction. Therefore $o(l)=2$.\\
Now suppose that $G$ contains more than one element of order $2$. Then all the elements of order $2$ are in same equivalence class. This implies that $|[l]|\ge 2$, again a contradiction. Thus, $l$ is the unique element of order $2$ in $G$.
\end{proof}


\begin{lemma}\label{relation of elements order}
Let $x=(x_1,\ldots,x_k)$ and $y=(y_1,\ldots,y_k)$ be two vertices of the difference graph $\mathcal{D}(G)$ of a finite nilpotent group $G$ such that $N(x)=N(y)\cup\{z\}$ for some $z \in V(\mathcal{D}(G))$. Then 
\begin{enumerate}
    \item[(i)] $z$ is a unique element of order $2$.
    \item[(ii)] $o(y)=2o(x)$.
\end{enumerate}

\end{lemma}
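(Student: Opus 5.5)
The plan is to work entirely through the coordinatewise adjacency criterion of Proposition~\ref{adjacency in difference graph}. The first observation is that adjacency in $\mathcal{D}(G)$ depends only on the cyclic subgroups involved: the conditions $x_i \twoheadrightarrow y_i$ and ``$x_l,y_l$ lie in a common cyclic subgroup'' are unchanged if $y$ is replaced by any generator $y'$ of $\langle y\rangle$, because the coordinates of $y'$ generate the same cyclic subgroups as those of $y$. We read the hypothesis $N(x)=N(y)\cup\{z\}$ as saying $z\in N(x)\setminus N(y)$, so in particular $N(y)\subseteq N(x)$.

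For (i), let $z'$ be any generator of $\langle z\rangle$. By the observation above, $z'\sim x$ and $z'\nsim y$, so $z'\in N(x)\setminus N(y)=\{z\}$. Hence $\langle z\rangle$ has a single generator, and since $z\neq e$ (it is a vertex) we get $o(z)=2$. Then $2\mid |G|$; say $P_1$ is the Sylow $2$-subgroup and $z=(t,e,\ldots,e)$ with $o(t)=2$. Next we determine exactly when an involution is adjacent to $x$. By Proposition~\ref{adjacency in difference graph} we need an index where $z$ strictly dominates $x$. Since $z_l=e$ for $l\ge2$, that index must be $1$, which forces $t\twoheadrightarrow x_1$, i.e.\ $x_1=e$. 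Conversely, if $x_1=e$ and $x_j\neq e$ for some $j\ge 2$, then $z\sim x$, since all other coordinate pairs involve $e$ and hence lie in a common cyclic subgroup. So $z\sim x$ iff $x_1=e$ and the odd part of $x$ is nontrivial, and similarly for $y$. These conditions do not depend on which involution $t$ is used. Hence every involution of $G$ lies in $N(x)\setminus N(y)=\{z\}$, so $z$ is the unique involution.

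For (ii), the previous step gives $x=(e,x_2,\ldots,x_k)$ with some $x_j\neq e$. Also $y_1\neq e$ or $y=(e,\ldots,e)$; the latter is not a vertex by Proposition~\ref{isolated vertices in nilpotent group}, so $y_1\neq e$. The goal is to show that $y_l$ and $x_l$ generate the same cyclic subgroup for every $l\ge 2$ and that $o(y_1)=2$; then $o(y)=2o(x)$. To compare the odd coordinates, use suitably chosen test vertices $w$ together with $N(y)\subseteq N(x)$, and also the reverse inclusion $N(x)\setminus\{z\}\subseteq N(y)$. Examples are $w=(e,\ldots,g_l,\ldots,e)$ with $g_l$ a generator of a maximal cyclic subgroup of $P_l$ containing $x_l$ or $y_l$, or $w=(t,x_2,\ldots,\hat{x}_l,\ldots,x_k)$ with one coordinate lowered. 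Each such $w$ lies in exactly one of $N(x)$, $N(y)$ unless the corresponding coordinates of $x$ and $y$ agree up to generation. For the $2$-part, because $G$ has a unique involution, $P_1$ is cyclic or generalized quaternion. If $o(y_1)\ge 4$, I would exhibit a vertex $w=(s,\ldots)$ with $s$ the involution $t$ and odd part dominated by that of $x$. Such a $w$ should be adjacent to $x$ but not to $y$ and distinct from $z$, contradicting the hypothesis.

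The main obstacle is (ii), specifically choosing test vertices that force the odd coordinates of $x$ and $y$ to agree while $P_1$ contributes exactly a factor $2$. Case distinctions arise according to whether $x_l$ or $y_l$ lies in $M(P_l)$, since then no strictly larger element exists in that coordinate. I would handle these by choosing $w$ to be dominated, rather than dominating, in that coordinate.
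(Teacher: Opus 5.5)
\textbf{There is a genuine gap in part (ii).}

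Your argument for (i) is correct and is essentially the paper's. The paper uses $z^{-1}$ where you use an arbitrary generator of $\langle z\rangle$, and it then notes that every involution is adjacent to $x$.

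The gap is that the intermediate goal you set, $\langle x_l\rangle=\langle y_l\rangle$ for every $l\ge 2$, is false, so no choice of test vertices can establish it. Let $G=\mathbb{Z}_4\times P$ with $\mathbb{Z}_4=\langle g\rangle$, $t=g^2$ and $P=\mathbb{Z}_3\times\mathbb{Z}_3$. Let $a,b\in P$ have order $3$ with $\langle a\rangle\neq\langle b\rangle$, and put $x=(e,a)$, $y=(t,b)$. Proposition~\ref{adjacency in difference graph} gives $N(x)=\{(g,e),(t,e),(g^{-1},e)\}$ and $N(y)=\{(g,e),(g^{-1},e)\}$. So $N(x)=N(y)\cup\{z\}$ with $z=(t,e)$ and $o(y)=2o(x)$, yet the odd coordinates generate different subgroups.

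What must be proved is only that the odd parts of $x$ and $y$ have the same order. The paper aims at exactly this, comparing for each odd prime $p$ the powers of $p$ dividing $o(x)$ and $o(y)$. If you keep a coordinatewise argument, you need two cases:
\begin{itemize}
\item If $x_l,y_l$ are comparable for every $l$, one can force $\langle x_l\rangle=\langle y_l\rangle$.
\item If some $x_l,y_l$ are incomparable, you must show that the odd parts of both $x$ and $y$ live in the single coordinate $l$ and both have order $p$.
\end{itemize}
You also need that $y$ is a vertex for more than $y\neq e$. In $\mathbb{Z}_2\times\mathbb{Z}_9\cong\mathbb{Z}_{18}$, an element $x$ of order $3$ and a generator $y$ satisfy $N(x)=\{z\}=N(y)\cup\{z\}$, yet $o(y)=18$. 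Your sketch never says where $N(y)\neq\emptyset$ is used.

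Your plan for the $2$-part also fails as stated. In the situation the hypothesis actually produces, e.g. $G=\mathbb{Z}_8\times\mathbb{Z}_3$ with $x$ of order $3$, the only vertex $(t,w')$ with $w'$ dominated by the odd part of $x$ and adjacent to $x$ is $z$ itself. A witness that works is $w=(y_1,e,\ldots,e)$:
\begin{itemize}
\item It is adjacent to $x$, since $y_1\neq e$ and $x$ has a nontrivial odd coordinate.
\item If $w=y$, then $y\in N(x)\setminus\{z\}=N(y)$, which is absurd. Otherwise $w\in\langle y\rangle\setminus\{y\}$ is adjacent to $y$ in $\mathcal{P}(G)$, hence not in $\mathcal{D}(G)$.
\item Therefore $w\in N(x)\setminus N(y)$, and $w\neq z$ as soon as $o(y_1)\ge 4$, which is a contradiction.
\end{itemize}
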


\begin{proof}
Suppose that $N(x)=N(y)\cup\{z\}$. Clearly, $z\in N(x)$ but $z\notin N(y)$. If $o(z)>2$, then $z^{-1}\neq z$ and $N(z)=N(z^{-1})$, which implies $z^{-1}\in N(x)$. This contradicts the assumption that $N(x)\setminus N(y)$ contains exactly one vertex. Hence $o(z)=2$.

Next, we show that $G$ has a unique element of order $2$. Suppose, on the contrary, that $G$ contains more than one element of order $2$. Since $x\sim z$, we have $2\nmid o(x)$. Then every element of order $2$ is adjacent to $x$, implying $|N(x)\setminus N(y)|\ge 2$, a contradiction. Hence $z$ is the unique element of order $2$ in $G$.

Now let $p$ be an odd prime. We claim that $p^{a}\mid o(y)$ if and only if $p^{a}\mid o(x)$. First assume that $p^{a}\nmid o(x)$. Choose an element $l=(l_1,l_2,\ldots,e)$ such that $o(l_1)=2$, $o(l_2)=p^{a}$, and $x_2\in\langle l_2\rangle$. Then neither $o(l)\mid o(x)$ nor $o(x)\mid o(l)$. Also, $l,x$ lie in a common cyclic subgroup. Hence $l\sim x$. Since $2p^{a}\mid o(y)$, we have $l\not\sim y$, contradicting $N(x)=N(y)\cup\{z\}$.

Now we suppose that $p^{a}\nmid o(y)$. Choose $l=(e,l_2,\ldots,e)$ with $o(l_2)=p^{a}$ and $y_2\in\langle l_2\rangle$. Then neither $o(l)\mid o(y)$ nor $o(y)\mid o(l)$ and $l\sim y$. Since $p^{a}\mid o(x)$, we have $o(l)\mid o(x)$, which implies $l\not\sim x$, again a contradiction. Thus $p^{a}\mid o(y)$ if and only if $p^{a}\mid o(x)$ for all odd primes $p$.

Finally, suppose that $2^{a}\mid o(y)$ for some $a>1$. Then an element of order $2^{a}$ is not adjacent to $x$. Otherwise, $N(x) \neq N(y) \cup \{z\}$. Consequently, $2\mid o(x)$. This contradicts the fact that $x\sim z$ where $o(z)=2$. Therefore $2\mid o(y)$ and $2\nmid o(x)$.

Combining all cases, we conclude that $o(y)=2o(x)$.
\end{proof}


\begin{lemma}\label{nbh condition}
Let $x=(x_1,\ldots,x_k)$ and $y=(y_1,\ldots,y_k)$ be two elements of a finite nilpotent group $G$. Then $N(x)=N(y)\cup\{z\}$ in $\mathcal{D}(G)$ if and only if  either $G \cong \mathbb{Z}_{2^n}\times G',n >1 \text{ or } G \cong Q_{4m} \times G', m =2^r, r \geq 1$, where $G'$ is a Sylow $p$-subgroup with a maximal cyclic subgroup of prime order.
\end{lemma}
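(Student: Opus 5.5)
I would prove both directions, relying on Lemma~\ref{relation of elements order} for the forward direction and on an explicit neighbourhood computation via Proposition~\ref{adjacency in difference graph} for the converse.

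\emph{Forward direction.} Suppose $N(x)=N(y)\cup\{z\}$.

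First, by Lemma~\ref{relation of elements order}, $z$ is the unique involution of $G$ and $o(y)=2o(x)$. Hence the Sylow $2$-subgroup $P_1$ has a unique involution. A $2$-group with a unique involution is cyclic or generalized quaternion, so $P_1\cong\mathbb{Z}_{2^n}$ or $P_1\cong Q_{2^{r+2}}=Q_{4m}$ with $m=2^r$.

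Next, I would rule out $P_1\cong\mathbb{Z}_2$. In that case the involution would lie in $M(P_1)$, and the order comparison between $x$ and $y$ would collapse; this is why $n>1$. In the proof of Lemma~\ref{relation of elements order}, $x$ has odd order and $y$ differs from $x$ only in the $2$-coordinate, where $y_1$ has order $2$.

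Then I would analyse the odd part. Use Proposition~\ref{adjacency in difference graph} to compare the neighbours of $x=(e,x_2,\dots,x_k)$ and $y=(y_1,x_2',\dots)$.
\begin{itemize}
\item A neighbour $w$ of $x$ with $w_1=e$ needs indices $i\neq j$ among the odd coordinates with $x_i\twoheadrightarrow w_i$ and $w_j\twoheadrightarrow x_j$.
\item Every neighbour of $y$ must also be a neighbour of $x$, and the only extra neighbour of $x$ is $z$.
\item Hence there can be only one odd prime. Otherwise, taking $w=(e,\dots)$ with $w_2\twoheadrightarrow x_2$ and $x_3\twoheadrightarrow w_3$ gives a second vertex in $N(x)\setminus N(y)$ when the orders are arranged suitably.
\end{itemize}
Similarly, the condition that $G'$ has a maximal cyclic subgroup of prime order should come from the fact that $x$ and $y$ must be vertices, so not all coordinates lie in $M(P_i)$ (Proposition~\ref{isolated vertices in nilpotent group}). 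It should also come from checking that $z=(l,e)$ is adjacent to $x=(e,x_2)$ exactly when $x_2\neq e$, while $N(x)\setminus\{z\}=N(y)$ forces $x_2$ to generate a maximal cyclic subgroup of order $p$.

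\emph{Converse.} Take $G=P_1\times G'$ with $P_1$ cyclic of order $2^n$ ($n>1$) or generalized quaternion, and $G'$ a $p$-group with a maximal cyclic subgroup $\langle g\rangle$ of order $p$. Set
\[
x=(e,g),\qquad y=(z,g),\qquad z=(l,e),
\]
where $l$ is the unique involution. With Proposition~\ref{adjacency in difference graph} I would compute:
\begin{itemize}
\item $N(x)=\{(w_1,e): w_1\neq e\}$. Since $\langle g\rangle$ is maximal of order $p$, the $G'$-coordinate of any neighbour lies in $\langle g\rangle$ and $g\twoheadrightarrow e$ is the only option.
\item $N(y)$ is the same set with $w_1\notin\{e,l\}$, since $l\twoheadrightarrow w_1$ is impossible and the relation $w_1\twoheadrightarrow l$ is needed.
\end{itemize}
This uses $n>1$, so that elements of order $\ge 4$ exist. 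It gives $N(x)=N(y)\cup\{z\}$.

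The main obstacle is the forward direction's reduction to a single odd prime and to $G'$ having a maximal cyclic subgroup of prime order. Here I would follow the exhaustive neighbour-construction style of Lemma~\ref{relation of elements order}: for each failed condition, exhibit an explicit second vertex in $N(x)\setminus N(y)$ or a vertex in $N(y)\setminus N(x)$.
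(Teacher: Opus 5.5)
Your converse is correct and is the paper's construction, worked out in more detail. Two small slips: $y$ should be $(l,g)$, not $(z,g)$. And $N(y)$ has no vertex with $w_1=e$ because no $w_2\twoheadrightarrow g$ exists, by maximality of $\langle g\rangle$; it is not because $l\twoheadrightarrow w_1$ is impossible.

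The gap is in the forward direction. Beyond the unique-involution step nothing is actually established, and the concrete witnesses you propose fail. For the reduction to one odd prime you take $w=(e,w_2,w_3,\dots)$ with $w_2\twoheadrightarrow x_2$ and $x_3\twoheadrightarrow w_3$. By Proposition~\ref{adjacency in difference graph}, this $w$ is adjacent to $y$ as well whenever $y$ agrees with $x$ in the odd coordinates, which is the configuration you posit. The reason is that $e$ and $l$ lie in a common cyclic subgroup and the same pair of indices works. So $w$ never lands in $N(x)\setminus N(y)$. Moreover, Lemma~\ref{relation of elements order} only gives $o(y)=2o(x)$, i.e.\ $y_1=l$ and $o(y_i)=o(x_i)$ in the odd coordinates, not $y_i=x_i$.

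The missing idea is to exploit $o(y)=2o(x)$ by choosing the $2$-coordinate of the witness deliberately, as the paper does:
\begin{itemize}
\item If two odd primes $p_i,p_j$ divide $o(x)$, let $u$ be $x$ with its $p_j$-coordinate replaced by $e$ and its $2$-coordinate by $l$. Then $u\sim x$ and $u\neq z$, but $o(u)\mid o(y)$.
\item If $o(x)=p_i^{a}$ and another odd $p_j$ divides $|G|$, take $s$ with $s_i=y_i$, $s_j$ of order $p_j$, and all other coordinates trivial. Then $s\sim y$ but $o(x)\mid o(s)$.
\end{itemize}

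Your account of the prime-order condition is also wrong: it is not $x_2$ that must generate a maximal cyclic subgroup. Take $G=\mathbb{Z}_4\times(\mathbb{Z}_9\times\mathbb{Z}_3)$, $x=(0,(3,0))$ and $y=(2,(0,1))$. Then $N(x)=\{(w_1,(0,0)):w_1\neq 0\}$ and $N(y)=\{(1,(0,0)),(3,(0,0))\}$. So $N(x)=N(y)\cup\{z\}$ with $z=(2,(0,0))$, even though $(3,0)\in\langle(1,0)\rangle$, which is cyclic of order $9$.

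With one odd prime $p$, write $x=(e,x_2)$ and $y=(l,y_2)$. Two things are actually forced:
\begin{itemize}
\item $o(x_2)=p$. If $o(x_2)=p^{\alpha}$ with $\alpha>1$, then $(l,s)$ with $s\in\langle x_2\rangle$ of order $p$ is adjacent to $x$, while its order $2p$ divides $o(y)$.
\item $\langle y_2\rangle$ is maximal cyclic. Otherwise $(e,s_2)$ with $\langle s_2\rangle\supsetneq\langle y_2\rangle$ is adjacent to $y$, while $o(x)\mid o(s_2)$.
\end{itemize}
Proposition~\ref{isolated vertices in nilpotent group} plays no role in this step.

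Only once $\langle y_2\rangle$ is known to be maximal does $n>1$ follow: if $P_1\cong\mathbb{Z}_2$, then $y=(l,y_2)$ generates a maximal cyclic subgroup of $G$ and so is not a vertex. Your remark that the order comparison ``collapses'' is not an argument for this. The paper, which writes $n\ge 1$, does not treat this point explicitly either.
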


\begin{proof}
Suppose $N(x)=N(y)\cup\{z\}$. By Lemma~\ref{relation of elements order}, $G$ has a unique element of order $2$ and $2\nmid o(x)$. The only finite nilpotent groups with a unique element of order $2$ are $\mathbb{Z}_{2^n}\times G'$ with $n\ge 1$ and $Q_{4m}\times G'$ with $m=2^r$, $r\ge 1$, where $G'$ is a group of odd order.

Assume that $|G'|$ is divisible by at least two primes.
Since $2\nmid o(x)$, 
 without loss of generality consider $p_i\mid o(x)$.

\textbf{Case 1:} $p_j\nmid o(x), j \neq i$. Then $x=(e,x_i,\ldots,e)$ with $o(x_i)=p_i^{a_i}$. Since $p_i^{a_i}\mid o(x)$, we have $p_i^{a_i}\mid o(y)$. Choose $s=(e,y_i,e,\ldots,s_j,\ldots,e)$ with $o(s_j)=p_j$. Then $o(s)=p_i^{a_i}p_j$ and $o(y)=2p_i^{a_i}$. Thus neither $o(y)\mid o(s)$ nor $o(s)\mid o(y)$ holds, and $s,y$ lie in a common cyclic subgroup, implying $s\sim y$. However, $o(x)\mid o(s)$ implies $x\nsim s$, a contradiction.

\textbf{Case 2:} $p_j\mid o(x), j \neq i$. Choose $u$ such that $o(u)=2\prod_{r\neq 1,j} o(x_r)$. Then $o(u)\mid o(y)$ implies $u\nsim y$. Also, $o(u)\nmid o(x)$ and $o(x)\nmid o(u)$, while $u$ and $x$ lie in a common cyclic subgroup, implying $u\sim x$, a contradiction.

Hence $|G|$ is divisible by at most two primes.

Now assume $G'$ does not contain a maximal cyclic subgroup of prime order. Then $o(x)=p^{\alpha}$ and $o(y)=2p^{\alpha}$.

\textbf{Case 1:} $\alpha=1$. Choose $s=(e,s_2)$ with $o(s_2)=p^2$ and $y_2\in\langle s_2\rangle$. Then $o(x)\mid o(s)$ implies $x\nsim s$, while $o(y)\nmid o(s)$ and $o(s)\nmid o(y)$ imply $s\sim y$, a contradiction.

\textbf{Case 2:} $\alpha>1$. Choose $s=(y_1,s_1)$ with $s_1\in\langle x_1\rangle$ and $o(s_1)=p$. Then $o(s)\mid o(y)$ implies $y\nsim s$, while $o(x)\nmid o(s)$ and $o(s)\nmid o(x)$ imply $x\sim s$, again a contradiction.

Thus $G'$ contains a maximal cyclic subgroup of prime order.

Conversely, suppose $G\cong \mathbb{Z}_{2^n}\times G'$ with $n>1$ or $G\cong Q_{4m}\times G'$ with $m=2^r$, $r\ge 1$, where $G'$ is a Sylow $p$-subgroup with a maximal cyclic subgroup of prime order. Choose $x=(e,x_2)$ with $o(x_2)=p$ and $y=(y_1,y_2)$ with $o(y_1)=2$ and $o(y_2)=p$. Then $N(x)=N(y)\cup\{z\}$, where $z$ is the unique element of order $2$.
\end{proof}

\begin{theorem}\label{resolvingset}
Let $G= P_1 \times P_2\times \cdots \times P_k$ be a finite nilpotent group and let $x_1,\ldots,x_s$ be a system of representatives of the equivalence classes in $\mathcal{D}(G)$. If $G \cong \mathbb{Z}_{2^n} \times G'$ with $n>1$, or $G \cong Q_{4m} \times G'$ with $m=2^r$, $r \geq 1$, where $G'$ is a Sylow $p$-subgroup having a maximal cyclic subgroup of prime exponent, then
\[
S = (V(\mathcal{D}(G))  \setminus \{x_1,\ldots,x_s\})\cup \{x_k\},
\]
where $o(x_k)=2$, is a resolving set of $\mathcal{D}(G)$. Otherwise,
\[
S = V(\mathcal{D}(G)) \setminus \{x_1,\ldots,x_s\}
\]
is a resolving set of $\mathcal{D}(G)$. Moreover,
\[
\dim(\mathcal{D}(G)) \in \{\alpha,\alpha-1\},
\]
where $\alpha = |G| - \big(|M(P_1)||M(P_2)|\cdots|M(P_k)| + |\overline{G}|\big)$.
\end{theorem}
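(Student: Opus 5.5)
The plan is to use the standard twin-class argument for metric dimension: a lower bound from Remark~\ref{twinsets}, and an upper bound from exhibiting the resolving sets $S$. Here $\overline{G}$ is the set of twin classes of $\mathcal{D}(G)$, so $s=|\overline{G}|$. By Proposition~\ref{isolated vertices in nilpotent group},
\[
|V(\mathcal{D}(G))| - s = |G| - \big(|M(P_1)|\cdots|M(P_k)|+1\big) - s .
\]
We adopt the convention that makes this equal to $\alpha$; the stated formula for $\alpha$ silently absorbs the identity into the count of classes, and this bookkeeping should be checked at the end. Since twinness is an equivalence relation on $V(\mathcal{D}(G))$, Remark~\ref{twinsets} applied to each class $[x_t]$ forces any resolving set to contain all but at most one vertex of every class. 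Hence every resolving set has at least $\sum_t(|[x_t]|-1)=|V|-s$ elements, giving $\dim(\mathcal{D}(G))\ge \alpha-1$ under the convention above (or $\ge\alpha$ if no adjustment is needed). Connectivity of $\mathcal{D}(G)$, needed for distances, is quoted from the earlier work on difference graphs of nilpotent groups.

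For the upper bound, I show that $S=V\setminus\{x_1,\dots,x_s\}$ resolves in the generic case. Vertices of $S$ resolve themselves, and a vertex of $S$ is resolved from any representative by itself (distance $0$). So it suffices to resolve two representatives $x_a\ne x_b$. They lie in different classes, so they are not twins. For singleton classes, Lemma~\ref{unique involution} says the only possible singleton is the unique involution $l$, and Lemma~\ref{unique involution} together with Lemma~\ref{relation of elements order} will handle it.

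\textbf{Key step.} If $w\in N(x_a)\setminus N[x_b]$ with $w$ not a representative, then $d(w,x_a)=1\ne d(w,x_b)$ and $w$ resolves the pair. If $w$ is itself a representative $x_c$ with $|[x_c]|\ge2$, replace it by a twin $w'\in[x_c]\cap S$. Twins have the same neighbours outside $\{w,w'\}$, so $w'$ still separates $x_a$ and $x_b$, provided $w'\ne x_a,x_b$, which holds because $x_a,x_b$ are not in $[x_c]$. The only failure is when the symmetric difference of $N(x_a)$ and $N(x_b)$, after removing $x_a$ and $x_b$, consists only of singleton-class representatives. By Lemma~\ref{unique involution} there is at most one such vertex, namely the unique involution $z$. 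So the bad configuration is $N(x_a)=N(x_b)\cup\{z\}$ up to the adjacency between $x_a$ and $x_b$. Since Proposition~\ref{difference adjacency} rules out adjacency when orders divide and Lemma~\ref{relation of elements order} gives $o(x_b)=2o(x_a)$, the pair is nonadjacent. By Lemma~\ref{nbh condition} this happens exactly for $G\cong\mathbb{Z}_{2^n}\times G'$ or $Q_{4m}\times G'$ as in the statement. In that case, adding the involution representative $x_k$ to $S$ resolves every bad pair. Thus $\dim(\mathcal{D}(G))\le |S|$, which is $\alpha$ or $\alpha+1$ relative to the lower bound, giving the two-value conclusion.

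\textbf{Main obstacle.} The delicate point is this replacement argument. One must verify that the twin substitute $w'$ really lies in $S$ and avoids $x_a$ and $x_b$. One must also handle the case where $x_a$ and $x_b$ are adjacent, so that each lies in the other's neighbourhood, and the case where one of them is the singleton involution. I would first treat the pair with symmetric difference exactly $\{x_a,x_b\}$: in that case $x_a,x_b$ are true twins, contradicting distinct classes. The remaining work is reconciling the $\pm1$ in $\alpha$ with the count of the identity and of the singleton class.
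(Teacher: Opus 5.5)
Your route is the paper's: Remark~\ref{twinsets} on each twin class gives the lower bound, and $V(\mathcal{D}(G))\setminus\{x_1,\dots,x_s\}$ is the resolving set. A separating neighbour is swapped for a twin lying in $S$. Lemma~\ref{unique involution} shows the only possible obstruction is the unique involution $z$, and Lemma~\ref{nbh condition} identifies the exceptional groups. Your symmetric-difference formulation is more careful than the paper's Case~2, which jumps from ``the chosen $l_1$ is a singleton'' to $N(x_i)=N(x_j)\cup\{x_k\}$ without examining the other vertices separating $x_i$ and $x_j$. Two steps of your write-up need repair.

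\emph{The adjacent case.} You rule out $x_a\sim x_b$ by citing Lemma~\ref{relation of elements order}. That lemma assumes $N(x_a)=N(x_b)\cup\{z\}$ exactly, and this fails when $x_a\sim x_b$: then $x_b\in N(x_a)$ but $x_b\notin N(x_b)\cup\{z\}$, since $z\notin\{x_a,x_b\}$ in your bad configuration. So the argument is circular. Argue directly instead.
\begin{itemize}
\item Two involutions are never adjacent by Proposition~\ref{difference adjacency}, so say $o(x_a)>2$.
\item Then $x_a^{-1}\neq x_a$ satisfies $N(x_a^{-1})=N(x_a)$ and $x_a^{-1}\not\sim x_a$.
\item It lies in $[x_a]$ but is not the representative, so $x_a^{-1}\in S$, and $d(x_a^{-1},x_b)=1\neq 2=d(x_a^{-1},x_a)$.
\end{itemize}
The same inverse trick shows $\mathcal{D}(G)$ has no true twins at all. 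Every class is therefore an independent set of false twins, and your true-twin subcase disappears. What remains is exactly $N(x_a)=N(x_b)\cup\{z\}$ with $x_a\not\sim x_b$, where Lemma~\ref{nbh condition} applies.

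\emph{The count.} No convention is needed, and the one you adopt goes the wrong way. With $s=|\overline{G}|$, Proposition~\ref{isolated vertices in nilpotent group} gives $|V(\mathcal{D}(G))|-s=\alpha-1$ exactly. So Remark~\ref{twinsets} yields $\dim(\mathcal{D}(G))\ge\alpha-1$. In the generic case $|S|=\alpha-1$, so $\dim(\mathcal{D}(G))=\alpha-1$ there; in the exceptional case $|S|=\alpha$. This is precisely $\{\alpha-1,\alpha\}$. Declaring $|V(\mathcal{D}(G))|-s=\alpha$, as you propose, would give $\{\alpha,\alpha+1\}$ and contradict the statement. It also clashes with your own next line, which asserts $\dim(\mathcal{D}(G))\ge\alpha-1$.
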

\begin{proof}
Let $x_i \neq x_j$ be two distinct elements of $\{x_1,\ldots,x_s\}$. Then there exists $l_1 \in V(\mathcal{D}(G))$ such that $l_1 \in N(x_i)$ but $l_1 \notin N(x_j)$.

\textbf{Case 1.} \emph{$|[l_1]|\neq 1$.}  
Since $[l_1]$ is a non-singleton equivalence class, there exists a vertex  
$t \in S = V(\mathcal{D}(G)) \setminus \{x_1,\ldots,x_s\}$ such that $t \in [l_1]$.  
Therefore $x_i \sim t$ and $x_j \nsim t$, which implies $d(x_i,t)=1$ and $d(x_j,t)\neq 1$. Hence $t$ resolves $x_i$ and $x_j$.

\textbf{Case 2.} \emph{$|[l_1]|=1$.}  
In this case, $l_1=x_k$ and hence $N(x_i)=N(x_j)\cup\{x_k\}$. By Lemma~\ref{nbh condition}, either  
$G \cong \mathbb{Z}_{2^n}\times G'$ with $n>1$, or  
$G \cong Q_{4m}\times G'$ with $m=2^r$, $r\geq 1$,  
where $G'$ is a Sylow $p$-subgroup having a maximal cyclic subgroup of prime exponent.

Assume $x_i=(e,x)$, where $x$ is a generator of the maximal cyclic subgroup of prime exponent, and  
$x_j=(y_1,x)$ with $o(y_1)=2$. Then $N(x_i)=N(x_j)\cup\{x_k\}$, and $x_k$ is the unique vertex that distinguishes $x_i$ and $x_j$. Since $x_k \in \{x_1,\ldots,x_s\}$, we include $x_k$ and define  
\[
S = (V(\mathcal{D}(G)) \setminus \{x_1,\ldots,x_s\} )\cup \{x_k\}.
\]
Thus $x_k \in S$, and $S$ resolves $x_i$ and $x_j$.

Therefore, in both cases, the set $S$ resolves every pair of distinct equivalence class representatives. The result now follows from Remark~\ref{twinsets} and Proposition~\ref{isolated vertices in nilpotent group}.
\end{proof}

\begin{lemma}\label{cyclic_equiv_classes}
Let $G$ be a finite cyclic group of order $n$, where $n$ is not a prime power. Then the number of equivalence classes of $\mathcal{D}(G)$ under the relation $\sim$ is $\tau(n)-2$, where $\tau(n)$ denotes the number of positive divisors of $n$.
\end{lemma}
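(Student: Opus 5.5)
The plan is to reduce everything to divisors of $n$. For a cyclic group $G=\mathbb{Z}_n$ the enhanced power graph is complete, and two elements are adjacent in $\mathcal{P}(G)$ exactly when one order divides the other. Hence $x\sim y$ in $\mathcal{D}(G)$ if and only if $o(x)$ and $o(y)$ are incomparable under divisibility. By Proposition~\ref{isolated vertices in nilpotent group} (each $M(P_i)$ consists of the generators of the cyclic $P_i$), $V(\mathcal{D}(G))$ is exactly the set of elements whose order $d$ satisfies $d\mid n$ and $1<d<n$. So the vertex orders range over the $\tau(n)-2$ divisors in
\[
\mathcal{T}=\{d : d\mid n,\ 1<d<n\}.
\]
For $d\in\mathcal{T}$ let $I(d)$ be the set of $e\in\mathcal{T}$ incomparable with $d$. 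Then $N(x)$ is the set of vertices whose order lies in $I(o(x))$, so $N(x)$ depends only on $o(x)$. Each $I(d)$ is nonempty because $n$ is not a prime power: if $d$ is a power of $p$, take $e=q$ for another prime $q\mid n$; otherwise take $e=p^{v_p(n)}$ for a prime $p\mid d$ with $v_p(d)<v_p(n)$, which exists since $d<n$. Consequently elements of equal order are (false) twins, and there are at most $\tau(n)-2$ classes. It remains to show that elements $x,y$ with distinct orders $d\ne d'$ are never twins.

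\emph{Comparable orders}, say $d\mid d'$ with $d\neq d'$. Then $x\nsim y$, so twins would have to satisfy $I(d)=I(d')$, and I exhibit a divisor in exactly one of these sets. Pick a prime $p$ with $v_p(d)<v_p(d')$ and set $e=p^{v_p(d')}$; then $e\mid d'$. If $d$ is not a power of $p$, then $e$ and $d$ are incomparable, so $e\in I(d)\setminus I(d')$. If instead $d=p^b$, choose a prime $q\ne p$ with $q\mid n$. If $q\mid d'$, the divisor $e=q$ works, since it is incomparable with $d$ and divides $d'$. If $q\nmid d'$, take $e=p^bq$: this is a proper divisor because $v_p(n)\ge v_p(d')>b$, it is a multiple of $d$, and it is incomparable with $d'$. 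So $e\in I(d')\setminus I(d)$.

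\emph{Incomparable orders.} Now $x\sim y$, and twins would have to satisfy $I(d)\cup\{d\}=I(d')\cup\{d'\}$. Choose a prime $p$ with $v_p(d)>v_p(d')$ and put $e=p^{v_p(d)}$. Then $e\mid d$ and $e$ is incomparable with $d'$. So $e\in I(d')$ but $e\notin I(d)\cup\{d\}$, unless $e=d$. The symmetric argument applies from the side of $d'$. The only remaining case is $d=p^a$ and $d'=q^b$ with $p\ne q$, and this is where I expect the main bookkeeping.
\begin{itemize}
\item If $v_q(n)>b$, use $e=q^{b+1}$; symmetrically, if $v_p(n)>a$, use $e=p^{a+1}$.
\item If $n$ has a third prime $r$, use $e=p^ar$, which is comparable with $d$ and incomparable with $d'$.
\item Otherwise $n=p^aq^b$. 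If $b\ge 2$, take $e=p^aq$; if $a\ge 2$, take $e=pq^b$.
\item Finally, if $n=pq$, then $\mathcal{D}(G)\cong K_{p-1,q-1}$ with $p-1$ and $q-1$ not both equal to $1$. Here two vertices from different parts have $N[\cdot]$ equal to the whole graph only if both parts are singletons, so they cannot be true twins.
\end{itemize}

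Combining the two cases, the equivalence classes correspond bijectively to the divisors in $\mathcal{T}$, giving exactly $\tau(n)-2$ classes.
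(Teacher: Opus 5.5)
Your proof is correct, but it takes a different route from the paper's in two places.

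\textbf{How the paper argues.} The paper disposes of elements of prime-power order by citing Lemma~\ref{equivalence class}. For the remaining elements it splits into two cases.
\begin{itemize}
\item In the comparable case ($o(x)\mid o(y)$ with $o(x)$ not a prime power) it uses exactly your witness, an element of order $p_j^{v_{p_j}(o(y))}$.
\item In the adjacent case it does not look at divisors at all. Of two adjacent vertices, at least one, say $x$, has order $>2$. So $x^{-1}\neq x$ has the same open neighbourhood as $x$, which gives $x^{-1}\in N[y]\setminus N[x]$; hence $x$ and $y$ cannot be true twins.
\end{itemize}
That one observation replaces your whole incomparable-orders analysis: the reduction to $d=p^a$, $d'=q^b$, the four bullets, and the $K_{p-1,q-1}$ endgame. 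You needed the detour because the order-level condition $I(d)\cup\{d\}=I(d')\cup\{d'\}$ forgets that $x$ is the only element of its order in $N[x]$. That fact is exactly what the inverse trick uses, and it is why you had to return to the element level when $n=pq$.

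\textbf{What your route buys.} It is self-contained and more careful.
\begin{itemize}
\item You treat elements of prime-power order directly, including the comparable case $d=p^b$. The paper instead relies on Lemma~\ref{equivalence class}, whose proof only considers $N(\cdot)=N(\cdot)$ and never true twins.
\item You explicitly separate true twins (adjacent pairs) from false twins (non-adjacent pairs). The paper's Case~1 asserts $x\sim y$ whenever $o(x)\nmid o(y)$, but adjacency also needs $o(y)\nmid o(x)$.
\end{itemize}

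\textbf{A slip in the nonemptiness of $I(d)$.} For $d$ not a prime power, you claim a prime $p\mid d$ with $v_p(d)<v_p(n)$ exists; it need not (take $n=30$, $d=6$). Just drop the requirement $p\mid d$. For any prime $p$ with $v_p(d)<v_p(n)$, the divisor $p^{v_p(n)}$ is incomparable with $d$, since $d$ is not a prime power. This step is redundant anyway, given the proposition you cite for the vertex set.

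\textbf{A missing line in the incomparable case.} The claim that $p^{v_p(d)}$ is incomparable with $d'$ needs $d'$ not to be a power of $p$. This holds because otherwise $d'\mid d$; say so explicitly.
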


\begin{proof}
Let $x,y \in G$ such that $[x]=[y]$. If $o(x)\neq o(y)$ and $o(x)=p^{\alpha}$ for some prime $p$, then by Lemma~\ref{equivalence class} we must have $o(y)=p^{\alpha}$, a contradiction. Hence assume that $o(x)$ is divisible by at least two distinct primes.

\textbf{Case 1.} \emph{$o(x)\nmid o(y)$.}  
Since $G$ is cyclic, $x$ and $y$ lie in a common cyclic subgroup, and hence $x \sim y$. Without loss of generality assume $y \neq y^{-1}$. Then $x \sim y^{-1}$ but $y \nsim y^{-1}$, contradicting $[x]=[y]$.

\textbf{Case 2.} \emph{$o(x)\mid o(y)$.}  
Let $o(x)=p_1^{\alpha_1}\cdots p_k^{\alpha_k}$ and $o(y)=p_1^{\beta_1}\cdots p_k^{\beta_k}$ with $\alpha_i \leq \beta_i$ for all $i$ and $\alpha_j < \beta_j$ for some $j$, where $k\ge 2$. Choose an element $y'$ with $o(y')=p_j^{\beta_j}$. Then $x \sim y'$ but $y \nsim y'$, contradicting $[x]=[y]$.

Therefore $o(x)=o(y)$, and hence equivalence classes correspond exactly to element orders. Since the identity and generators form singleton classes, the total number of equivalence classes is $\tau(n)-2$.
\end{proof}
\begin{corollary}\label{metric dimension of Zn}
    Let $G$ be a finite cyclic group of order $n$ and $n$ is not of prime power order. Then $dim(\mathcal{D}(G))$ is $n-\phi(n)-\tau(n)+2$ if $G \cong \mathbb{Z}_{2^k}\times \mathbb{Z}_{p}, p \neq 2, k >1$ otherwise $dim(\mathcal{D}(G)) = n -\phi(n)- \tau(n)+1$.
\end{corollary}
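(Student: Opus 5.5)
The proof combines Theorem~\ref{resolvingset} with Lemma~\ref{cyclic_equiv_classes}, together with the lower bound from Remark~\ref{twinsets}.

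\textbf{Step 1: count the vertices.} For $G=\mathbb{Z}_n$ with $n=p_1^{a_1}\cdots p_k^{a_k}$ and $k\ge 2$, each Sylow subgroup $P_i\cong\mathbb{Z}_{p_i^{a_i}}$ is cyclic. Hence $M(P_i)$ is the set of generators of $P_i$, and $|M(P_i)|=\phi(p_i^{a_i})$. The product of these is $\phi(n)$, which is exactly the number of generators of $G$. By Proposition~\ref{isolated vertices in nilpotent group},
\[
|V(\mathcal{D}(G))|=n-\phi(n)-1.
\]

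\textbf{Step 2: count the classes and get the basic dimension.} By Lemma~\ref{cyclic_equiv_classes}, the twin classes of $\mathcal{D}(G)$ are exactly the sets of elements of a fixed order $d$, where $d\mid n$ and $d\ne 1,n$. There are $\tau(n)-2$ of them. Choose one representative from each class. By Remark~\ref{twinsets}, every resolving set contains all but at most one vertex of each class. So every resolving set has size at least
\[
(n-\phi(n)-1)-(\tau(n)-2)=n-\phi(n)-\tau(n)+1.
\]
Theorem~\ref{resolvingset} shows that, outside the exceptional family, deleting one representative per class already gives a resolving set. This yields $\dim(\mathcal{D}(G))=n-\phi(n)-\tau(n)+1$ in the generic case.

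\textbf{Step 3: the exceptional case.} A cyclic group $\mathbb{Z}_{2^n}\times G'$ from Lemma~\ref{nbh condition} requires $G'$ to be a cyclic $p$-group whose maximal cyclic subgroup has prime order. So $G'\cong\mathbb{Z}_p$, and $G\cong\mathbb{Z}_{2^k}\times\mathbb{Z}_p$ with $k>1$. The quaternion option is not cyclic and is excluded.

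In this case, let $x$ have order $p$, let $y$ have order $2p$, and let $z$ be the unique involution. Then $N(x)=N(y)\cup\{z\}$. The class of $z$ is the singleton $\{z\}$ by Lemma~\ref{unique involution}, and the classes of $x$ and $y$ are the elements of order $p$ and $2p$ respectively.

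\emph{Lower bound.} Suppose $W$ is a resolving set of the minimal size $n-\phi(n)-\tau(n)+1$. Then $W$ omits exactly one element from each class, so it omits $z$ and some $x'$ of order $p$ and some $y'$ of order $2p$. Here $x'$ and $y'$ are twins of $x$ and $y$, so $N(x')=N(y')\cup\{z\}$.
\begin{itemize}
\item For any $w\in W$ with $w\neq x',y'$ we have $w\ne z$, so $w$ is adjacent to $x'$ if and only if it is adjacent to $y'$. Hence $d(w,x')=d(w,y')=1$ or both distances are at least $2$.
\item If both distances are at least $2$, we must also rule out $d(w,x')=2$ versus $d(w,y')=3$. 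Since $N(x')$ and $N(y')$ differ only by $z$, and $z$'s neighbours are the odd-order non-identity non-generators, the distances from $w$ to $x'$ and to $y'$ can differ only through paths via $z$. This can be settled by a short check.
\end{itemize}
So nothing in $W$ resolves $x'$ and $y'$, a contradiction. This adds one to the lower bound.

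\emph{Upper bound.} Theorem~\ref{resolvingset} supplies a resolving set with $z$ added, giving $\dim(\mathcal{D}(G))=n-\phi(n)-\tau(n)+2$.

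\textbf{Main obstacle.} The delicate point is the lower bound in the exceptional case. One must verify carefully that no vertex other than $z$ resolves $x'$ and $y'$, including the possible difference between distance $2$ and distance $3$. This check uses the explicit small structure of $\mathcal{D}(\mathbb{Z}_{2^k p})$.
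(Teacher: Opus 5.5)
Your proposal is correct and follows the paper's route: the vertex count from Proposition~\ref{isolated vertices in nilpotent group}, the class count $\tau(n)-2$ from Lemma~\ref{cyclic_equiv_classes}, the twin bound of Remark~\ref{twinsets}, and the resolving sets of Theorem~\ref{resolvingset}.

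It is also more complete than the paper. The paper's proof reads both values directly off Theorem~\ref{resolvingset}. That theorem only supplies resolving sets together with the twin bound, which pins the dimension down to two consecutive values. So the paper never shows that, in the exceptional case, no set of size $n-\phi(n)-\tau(n)+1$ resolves. Your Step~3 lower bound is exactly that missing piece.

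The check you defer is short.
\begin{itemize}
\item Since $N(y')\subseteq N(x')$, every $w\neq x',y'$ satisfies $d(w,x')\le d(w,y')$.
\item In $\mathcal{D}(\mathbb{Z}_{2^kp})$, every vertex other than $z$ lies within distance $2$ of $y'$. Elements of order $2^i$ with $i\ge 2$ are adjacent to $y'$. Every element of order $2^jp$ with $0\le j\le k-1$ is adjacent to an element of order $2^k$, and such an element is adjacent to $y'$.
\end{itemize}
Now take $w\in W$. If $w\in N(y')$, then $w\in N(x')$, so both distances are $1$. The case $w\in N(x')\setminus N(y')$ would force $w=z\notin W$. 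Otherwise $2\le d(w,x')\le d(w,y')\le 2$. So no vertex of $W$ resolves $x'$ and $y'$, and your lower bound goes through.
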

\begin{proof}
    Every finite cyclic group of order $n$ is isomorphic to $\mathbb{Z}_{n}$. Therefore the number of equivalence class of $\mathbb{Z}_{n}$ are $\tau(n)-2$. By Theorem \ref{resolvingset}, $dim(\mathcal{D}(\mathbb{Z}_n)) = n -\phi(n) - \tau(n)+2$ for $G \cong \mathbb{Z}_{2^k} \times \mathbb{Z}_{p}$ otherwise $dim{(\mathcal{D}(\mathbb{Z}_n))} = n - \phi(n)- \tau(n)+1$.
\end{proof}

\begin{theorem}\label{dihedral_dim}
For the Dihedral group $D_{2n}= \langle x, y  :  x^{n} = y^2 = e,  xy = yx^{-1} \rangle $, if the cyclic subgroup $\langle x\rangle$ of order $n$ is isomorphic to $\mathbb{Z}_{2^k}\times \mathbb{Z}_p$, where $p\neq 2$ and $k>1$, then
\[
\dim(\mathcal{D}(D_{2n})) = n-\phi(n)-\tau(n)+2.
\]
Otherwise,
\[
\dim(\mathcal{D}(D_{2n})) = n-\phi(n)-\tau(n)+1.
\]
\end{theorem}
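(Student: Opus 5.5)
The plan is to reduce the statement to Corollary~\ref{metric dimension of Zn} through the isomorphism $\mathcal{D}(D_{2n})\cong\mathcal{D}(\mathbb{Z}_n)$ recorded in the preliminaries. Metric dimension is a graph invariant, so once the isomorphism is justified we get $\dim(\mathcal{D}(D_{2n}))=\dim(\mathcal{D}(\mathbb{Z}_n))$. The two cases of the theorem then come straight from the two cases of Corollary~\ref{metric dimension of Zn}, with $\langle x\rangle\cong\mathbb{Z}_n$ playing the role of $G$.

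First I would justify the isomorphism directly. In $\mathcal{P}_E(D_{2n})$, two distinct vertices are adjacent exactly when they lie in a common maximal cyclic subgroup. The maximal cyclic subgroups are $\langle x\rangle$ and the order-$2$ subgroups $\langle x^iy\rangle=\{e,x^iy\}$. A reflection $x^iy$ therefore has enhanced-power neighbourhood $\{e\}$. It is also adjacent to $e$ in $\mathcal{P}(D_{2n})$, so the edge $\{e,x^iy\}$ is removed and $x^iy$ becomes isolated in $\mathcal{P}_E-\mathcal{P}$. The identity is adjacent to everything in both graphs, so it is isolated as well.

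The remaining vertices are the nonidentity rotations $x^j$. For two rotations, adjacency in $\mathcal{P}_E(D_{2n})$ and in $\mathcal{P}(D_{2n})$ is exactly adjacency in $\mathcal{P}_E(\langle x\rangle)$ and $\mathcal{P}(\langle x\rangle)$, since both relations depend only on the cyclic subgroups generated. Hence the induced subgraph on the rotations, with isolated vertices deleted, is $\mathcal{D}(\langle x\rangle)\cong\mathcal{D}(\mathbb{Z}_n)$.

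Next I would apply Corollary~\ref{metric dimension of Zn} to $\mathbb{Z}_n$. Its hypothesis is that $n$ is not a prime power, so I need to handle prime powers separately. If $n=p^a$, then $\mathcal{D}(\mathbb{Z}_n)$ is empty, because any two elements of a cyclic $p$-group have comparable orders and hence are adjacent in the power graph. The theorem's formula should then be read under the standing assumption that $n$ is not a prime power, which I would state explicitly. In the non-prime-power case, the formula splits according to whether $\mathbb{Z}_n\cong\mathbb{Z}_{2^k}\times\mathbb{Z}_p$ with $p$ odd and $k>1$, that is, $n=2^kp$. This matches the case split in the theorem word for word.

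The main obstacle is not in this reduction but in the correctness of the cited corollary, which relies on Theorem~\ref{resolvingset}. That theorem only locates the dimension in $\{\alpha,\alpha-1\}$ and asserts a resolving set, not a lower bound that matches in the exceptional case. If I wanted to be safe I would check the lower bound in the exceptional case $n=2^kp$ directly. First, Remark~\ref{twinsets} forces all but one vertex of each twin class into any resolving set. Second, the pair $x_i$ (of order $p$) and $x_j$ (of order $2p$) from Lemma~\ref{nbh condition} can only be resolved by the unique involution or by vertices at unequal distance. Since the graph has diameter at most $3$ and these two vertices have neighbourhoods differing only by the involution, every other vertex lies at the same distance from both. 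So the involution, which is a singleton class, must be added, giving exactly one more than the twin bound. For the theorem itself, though, I would simply invoke the isomorphism and the corollary.
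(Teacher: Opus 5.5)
Your proposal is correct and is essentially the paper's proof. The paper also just combines $\mathcal{D}(D_{2n})\cong\mathcal{D}(\mathbb{Z}_n)$, the count $\tau(n)-2$ of twin classes from Lemma~\ref{cyclic_equiv_classes}, and Theorem~\ref{resolvingset}, and it tacitly assumes, as you make explicit, that $n$ is not a prime power. Your extra lower-bound check for $n=2^kp$ is worth keeping, since Theorem~\ref{resolvingset} only places the dimension in $\{\alpha,\alpha-1\}$. However, ``diameter at most $3$'' alone does not show that every other vertex is equidistant from the pair: you also need that the involution's neighbours are exactly the elements of order $p$, each of which is at distance $2$ from the order-$2p$ vertex through any element of order $2^k$.
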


\begin{proof}
Since $\mathcal{D}(D_{2n}) \cong \mathcal{D}(\mathbb{Z}_n)$, the number of equivalence classes in $\mathcal{D}(D_{2n})$ is $\tau(n)-2$ by Lemma~\ref{cyclic_equiv_classes}.  

By Theorem~\ref{resolvingset}, we obtain
\[
\dim(\mathcal{D}(D_{2n})) =
\begin{cases}
n-\phi(n)-\tau(n)+2, & \text{if } \mathbb{Z}_n \cong \mathbb{Z}_{2^k}\times \mathbb{Z}_p, \\
n-\phi(n)-\tau(n)+1, & \text{otherwise}.
\end{cases}
\]
\end{proof}

\begin{theorem}\label{quaternion_dim}
For the generalized quaternion group ${Q}_{4n} = \langle a, b  :  a^{2n}  = e, a^n= b^2, ab = ba^{-1} \rangle$, if the cyclic subgroup $\langle a \rangle $ is isomorphic to $\mathbb{Z}_{2^k}\times \mathbb{Z}_p$, where $p\neq 2$ and $k>1$, then
\[
\dim(\mathcal{D}(Q_{4n})) = 2n-\phi(2n)-\tau(2n)+2.
\]
Otherwise,
\[
\dim(\mathcal{D}(Q_{4n})) = 2n-\phi(2n)-\tau(2n)+1.
\]
\end{theorem}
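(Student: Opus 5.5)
The plan is to mirror the proof of Theorem~\ref{dihedral_dim}, reducing everything to the cyclic case. First I would invoke the isomorphism $\mathcal{D}(Q_{4n})\cong\mathcal{D}(\mathbb{Z}_{2n})$ recorded in the preliminaries. It comes from the structure of $\mathcal{M}(Q_{4n})$: the $n$ maximal cyclic subgroups $\langle a^ib\rangle$ of order $4$ each meet $\langle a\rangle$ in $\langle a^n\rangle$. Inside $\langle a^ib\rangle\cong\mathbb{Z}_4$, any two elements are comparable in the power relation, so no difference-graph edge arises there. Hence every element $a^ib$ is isolated in $\mathcal{P}_E(Q_{4n})-\mathcal{P}(Q_{4n})$ and is deleted.

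Every remaining edge lives inside $\langle a\rangle\cong\mathbb{Z}_{2n}$. Moreover, adjacency between two elements of $\langle a\rangle$ is the same whether computed in $Q_{4n}$ or in $\langle a\rangle$: two such elements generate a cyclic subgroup of $\langle a\rangle$, and the power relation is intrinsic. Thus the metric dimension of $\mathcal{D}(Q_{4n})$ equals that of $\mathcal{D}(\mathbb{Z}_{2n})$, since metric dimension is a graph invariant.

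Next I would apply Corollary~\ref{metric dimension of Zn} with $2n$ in place of $n$, after checking its hypothesis that $2n$ is not a prime power. This case analysis is the step I expect to be the main obstacle.

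\textbf{Case $2n$ not a prime power.} Lemma~\ref{cyclic_equiv_classes} gives $\tau(2n)-2$ twin classes. Theorem~\ref{resolvingset} then yields a resolving set of size $|V|-(\tau(2n)-2)$, with one extra vertex exactly when $\mathbb{Z}_{2n}\cong\mathbb{Z}_{2^k}\times\mathbb{Z}_p$ with $k>1$. This extra vertex is the unique involution $a^n$, required because of the pair $N(x)=N(y)\cup\{a^n\}$ from Lemma~\ref{nbh condition}. The vertex count is $|V|=2n-\phi(2n)-1$ by Proposition~\ref{isolated vertices in nilpotent group}, since $|M(\mathbb{Z}_{2n})|=\phi(2n)$.

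Combining these gives $2n-\phi(2n)-\tau(2n)+1$ in general and $2n-\phi(2n)-\tau(2n)+2$ in the exceptional case. The lower bound matches, by Remark~\ref{twinsets} applied class by class. In the exceptional case one also needs that the involution is a singleton class (Lemma~\ref{unique involution}) which must be included.

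\textbf{Case $2n=2^m$.} Then $\mathcal{D}(\mathbb{Z}_{2n})$ has no edges at all, since all orders are comparable. I would note that in this case the graph is empty, so the statement is read for $2n$ not a prime power, as in Corollary~\ref{metric dimension of Zn}.
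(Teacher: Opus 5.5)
Your route is the paper's: you reduce via $\mathcal{D}(Q_{4n})\cong\mathcal{D}(\mathbb{Z}_{2n})$ and then apply the cyclic case, Corollary~\ref{metric dimension of Zn}. Your justification of that isomorphism is sound. You are also right that the statement needs $n$ not a power of $2$. For $n=2^r$ with $r\ge 2$ the graph is empty, while the ``otherwise'' formula gives $2^r-r-1>0$, so the theorem as written silently requires this restriction.

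The step that fails is your lower bound in the exceptional case $2n=2^kp$, $k\ge 2$.

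\emph{The involution is not forced.} The involution $a^n$ need not lie in a minimum resolving set. In $\mathcal{D}(\mathbb{Z}_{12})$ the edges are $6$--$\{4,8\}$, $\{4,8\}$--$\{3,9\}$ and $\{3,9\}$--$\{2,10\}$. The set $\{4,8,3,2\}$ resolves this graph: the remaining vertices $6,9,10$ get the vectors $(1,1,2,3)$, $(1,1,2,1)$, $(2,2,1,2)$. It has the optimal size $4$ and omits $6$. Remark~\ref{twinsets} gives nothing for a singleton class, and Theorem~\ref{resolvingset} only pins the dimension down to two consecutive values. So the extra $+1$ needs its own argument, and neither your sketch nor the cited results supply it.

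\emph{A counting argument works.} Suppose $W$ is a resolving set of size $|V|-(\tau(2n)-2)$. Then $W$ must contain exactly $|C|-1$ vertices from every twin class $C$. Hence it omits $a^n$, exactly one vertex $x'$ of order $p$, and exactly one vertex $y'$ of order $2p$.

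The graph $\mathcal{D}(\mathbb{Z}_{2^kp})$ is bipartite between the orders $2^i$ ($1\le i\le k$) and the orders $2^jp$ ($0\le j\le k-1$), with an edge exactly when $i>j$. Consequently:
\begin{itemize}
\item every vertex of order $2^i$ with $i\ge 2$ is adjacent to both $x'$ and $y'$;
\item every other vertex of order $2^jp$ is at distance $2$ from both, since it lies on the same side of the bipartition and has a common neighbour of order $2^k$ with each of them.
\end{itemize}
Only $a^n$ separates $x'$ from $y'$. So $W$ is not resolving, and $\dim\mathcal{D}(Q_{4n})\ge 2n-\phi(2n)-\tau(2n)+2$.
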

\begin{theorem}\label{semidihedral_dim}
For the semi-dihedral group  $SD_{8n} = \langle a, b  :  a^{4n} = b^2 = e,  ba = a^{2n -1}b \rangle$, if the cyclic subgroup $\langle a \rangle $ is isomorphic to $\mathbb{Z}_{2^k}\times \mathbb{Z}_p$, where $p\neq 2$ and $k>1$, then
\[
\dim(\mathcal{D}(SD_{8n})) = 4n-\phi(4n)-\tau(4n)+2.
\]
Otherwise,
\[
\dim(\mathcal{D}(SD_{8n})) = 4n-\phi(4n)-\tau(4n)+1.
\]
\end{theorem}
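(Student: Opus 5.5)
Since the paper records in the preliminaries that $\mathcal{D}(SD_{8n}) \cong \mathcal{D}(\mathbb{Z}_{4n})$, I would reduce everything to the cyclic case, exactly as in the proof of Theorem~\ref{dihedral_dim}. Metric dimension is a graph invariant. So it suffices to compute $\dim(\mathcal{D}(\mathbb{Z}_{4n}))$ with $n\ge 2$, which means only the subgroup $\langle a\rangle \cong \mathbb{Z}_{4n}$ matters.

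The first step is to justify the isomorphism, or at least recall why it holds. The elements outside $\langle a\rangle$ are the $a^ib$. They generate maximal cyclic subgroups of order $2$ (for $i$ even) or order $4$ (for $i$ odd), with $(a^ib)^2 = a^{2n}$ in the latter case. In the enhanced power graph, such an element is adjacent only to elements of its own cyclic subgroup. Within a cyclic group of order $2$ or $4$, any two non-identity elements have comparable orders, so they are already adjacent in the power graph. Hence every $a^ib$ is isolated in $\mathcal{P}_E(SD_{8n})-\mathcal{P}(SD_{8n})$ and is deleted. Moreover, two elements of $\langle a\rangle$ lie in a common cyclic subgroup of $SD_{8n}$ exactly when they lie in a common cyclic subgroup of $\langle a\rangle$. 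This is because the only maximal cyclic subgroup containing a non-identity element of $\langle a\rangle$ other than $a^{2n}$ is $\langle a\rangle$ itself, and $a^{2n}$ is joined to the $a^ib$ only by power-graph edges. So $\mathcal{D}(SD_{8n})$ is the induced difference graph on $\langle a\rangle$, that is, $\mathcal{D}(\mathbb{Z}_{4n})$.

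Next, I would apply Lemma~\ref{cyclic_equiv_classes} and Corollary~\ref{metric dimension of Zn} with $4n$ in place of $n$. This requires $4n$ not to be a prime power, i.e. $n$ not a power of $2$. If $4n$ is a power of $2$, then $\mathcal{D}(\mathbb{Z}_{4n})$ is empty, since any two elements of a cyclic $p$-group have comparable orders. I would note this degenerate case separately, or read the statement under the tacit hypothesis that $n$ is not a power of $2$. In the non-degenerate case, Lemma~\ref{cyclic_equiv_classes} gives $\tau(4n)-2$ twin classes. By Proposition~\ref{isolated vertices in nilpotent group}, the vertex count is $4n-\phi(4n)-1$, since the removed vertices are the generators and the identity. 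Theorem~\ref{resolvingset} then gives a resolving set of size $4n-\phi(4n)-1-(\tau(4n)-2)$, with one extra vertex exactly in the exceptional case $\mathbb{Z}_{4n}\cong \mathbb{Z}_{2^k}\times\mathbb{Z}_p$. Lemma~\ref{nbh condition} characterizes that case, and here $k>1$ holds automatically since $4\mid 4n$. This yields the two stated formulas.

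The main obstacle is the lower bound, namely that these resolving sets are optimal. Remark~\ref{twinsets} forces all but one vertex from each twin class, which gives the value $4n-\phi(4n)-\tau(4n)+1$. In the exceptional case, I would show that the extra vertex is forced. The two representatives $x=(e,x_2)$ and $y=(y_1,y_2)$ from Lemma~\ref{nbh condition} satisfy $N(x)=N(y)\cup\{z\}$, where $z$ is the unique involution. So every vertex other than $x,y,z$ is either adjacent to both or to neither of them, and hence is at distance $1$ from both or at distance $2$ from both. The diameter is at most $2$ here, since $z$ is adjacent to $x$. Therefore only $x$, $y$, or $z$ can resolve the pair $\{x,y\}$. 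The class $[z]$ is a singleton by Lemma~\ref{unique involution}, and $x,y$ are chosen representatives. So at least one of these three vertices must be added beyond the forced twin vertices, giving $+2$.
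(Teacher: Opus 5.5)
Your route is the paper's. The paper gives no separate argument for $SD_{8n}$; as in the proof of Theorem~\ref{dihedral_dim}, it transfers the cyclic result through $\mathcal{D}(SD_{8n})\cong\mathcal{D}(\mathbb{Z}_{4n})$, Lemma~\ref{cyclic_equiv_classes} and Theorem~\ref{resolvingset}. Three of your additions are correct and go beyond the paper:
\begin{itemize}
\item Your check of that isomorphism is sound.
\item You are right that the statement fails when $n$ is a power of $2$: the graph is then empty, while the formula gives $2n-\log_2(4n)>0$.
\item You are right that the paper never shows the extra vertex is \emph{forced} in the exceptional case, since Theorem~\ref{resolvingset} only gives $\dim\in\{\alpha,\alpha-1\}$.
\end{itemize}

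Your lower-bound argument, however, rests on a false claim. $\mathcal{D}(\mathbb{Z}_{4n})$ does \emph{not} have diameter at most $2$ in the exceptional case, and $z\sim x$ would not imply that anyway. For $4n=2^kp$, two vertices are adjacent iff their orders are $2^a$ and $2^bp$ with $0\le b<a\le k$, so the graph is bipartite. Already in $\mathbb{Z}_{12}$, the involution is adjacent only to the elements of order $3$, and an element of order $6$ only to the elements of order $4$, so their distance is $3$.

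The conclusion you need still holds and follows directly. A vertex $w\notin\{x,y,z\}$ adjacent to neither $x$ nor $y$ has order $2^bp$ with $b\le k-1$. Every element of order $2^k$ is then a common neighbour of $x$, $y$ and $w$, so $d(x,w)=d(y,w)=2$.

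You should also run the count for an arbitrary resolving set $W$, not for the chosen representatives. Suppose $W$ omits $z$, some $x^*\in[x]$ and some $y^*\in[y]$. Then $N(x^*)=N(y^*)\cup\{z\}$, and the same argument shows no vertex of $W$ resolves $x^*,y^*$. Otherwise $W$ contains $z$, or all of $[x]$, or all of $[y]$. Together with Remark~\ref{twinsets}, this gives $|W|\ge 4n-\phi(4n)-\tau(4n)+2$. With these two repairs your proof is complete, and it is more complete than the paper's.
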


\section*{Declarations}

\textbf{Funding}: The first and the third author wishes to acknowledge the support of Core Research Grant (CRG/2022/001142) funded by  ANRF. The second author gratefully acknowledges the financial support received from the Indian Institute of Technology Guwahati under the Institute Post-Doctoral Fellowship (IITG/R\&D/IPDF/2024-25/20240828P1181).

\textbf{Conflicts of interest/Competing interests}: There is no conflict of interest regarding the publishing of this paper.

\textbf{Availability of data and material (data transparency)}: Not applicable.

\vspace{.3cm}
\textbf{Code availability (software application or custom code)}: Not applicable. 
 
\bibliographystyle{abbrv}


\vspace{1cm}
 \noindent
 {\bf Manisha\textsuperscript{\normalfont 1}, \bf Parveen\textsuperscript{\normalfont 1}, \bf Jitender Kumar\textsuperscript{\normalfont 1}}
 \bigskip

 \noindent{\bf Addresses}:

 \vspace{5pt}

\end{document}